\newtheorem{theorem}{Theorem}
\newenvironment{proof}[1][Proof]{\noindent\textbf{#1.} }{\ \rule{0.5em}{0.5em}}
\begin{document}

\title{A Lagrangian Method for Deriving New Indefinite Integrals of Special
Functions}
\author{John T. Conway \\
University of Agder, Grimstad, Norway\\
Tel: (47) 37 23 32 67 Fax: (47)\ 3723 30 01\\
Email: John.Conway@uia.no}
\maketitle
\date{}

\begin{abstract}
A new method is presented for obtaining indefinite integrals of common
special functions. The approach is based on a Lagrangian formulation of the
general homogeneous linear ordinary differential equation of second order. A
general integral is derived which involves an arbitrary function, and
therefore yields an infinite number of indefinite integrals for any special
function which obeys such a differential equation. Techniques are presented
to obtain the more interesting integrals generated by such an approach, and
many integrals, both previously known and completely new are derived using
the method. Sample results are given for Bessel functions, Airy functions,
Legendre functions and hypergeometric functions. More extensive results are
given for the complete elliptic integrals of the first and second kinds.
Integrals can be derived which combine common special functions as separate
factors.
\end{abstract}

\section{Introduction}

In tables of integrals such as [1-4], the bulk of the content consists of
definite integrals, with almost token collections of indefinite integrals.
However, the relatively few indefinite integrals which have been published
tend to be well known and heavily used. This paper presents a new and
surprisingly simple method for deriving indefinite integrals of both
elementary and special functions, provided the function satisfies an
ordinary linear differential equation of the form%
\begin{equation}
y^{\prime \prime }\left( x\right) +p\left( x\right) y^{\prime }\left(
x\right) +q\left( x\right) y\left( x\right) =0\text{.}  \label{Eqn0}
\end{equation}%
The principal result derived here is the indefinite integral%
\begin{equation*}
\dint f\left( x\right) \left( h^{\prime \prime }\left( x\right) +h^{\prime
}\left( x\right) p\left( x\right) +h\left( x\right) q\left( x\right) \right)
y\left( x\right) dx=
\end{equation*}%
\begin{equation}
f\left( x\right) \left( h^{\prime }\left( x\right) y\left( x\right) -h\left(
x\right) y^{\prime }\left( x\right) \right)  \label{Eqn1}
\end{equation}%
where $y\left( x\right) $ is any solution of equation (\ref{Eqn0}), $f\left(
x\right) $ is given by%
\begin{equation}
f\left( x\right) =\exp \left( \dint p\left( x\right) dx\right)  \label{Eqn1a}
\end{equation}%
and $h\left( x\right) $ is an arbitrary function. As $h\left( x\right) $ is
arbitrary, this equation yields an infinite number of integrals for any
function $y\left( x\right) $ which satisfies equation (\ref{Eqn0}). Many
well-known integrals in the literature can be derived very simply from
equation (\ref{Eqn1}), together with a large number of interesting new
integrals. Techniques for exploiting equation (\ref{Eqn1}) to obtain
interesting integrals are presented here, together with sample results for
selected special functions. Somewhat more than sample results are presented
for the complete elliptic integrals of the first and second kinds, but it is
impossible to do more than scratch the surface for any particular function
in a single paper. It seems that the total number of indefinite integrals
for special functions can be increased by a large factor using equation (\ref%
{Eqn1}). In particular, it is possible to derive integrals combining
products of different special functions, such as Bessel and Legendre
functions combined with each other, or with elliptic integrals, or other
special functions. Equation (\ref{Eqn1}) was originally derived from an
Euler-Lagrange equation, but once known, it can be proved in an elementary
manner without variational calculus and both derivations are given here.

Section 2 below presents the Euler-Lagrange formulation of equation (\ref%
{Eqn0}) and the derivation of equation (\ref{Eqn1}) from it, together with a
simpler proof. A second integral is also derived, which will not be
considered in detail here. Section 3 presents techniques for exploiting
equation (\ref{Eqn1}) and some well-known results are derived using this
method. It is shown that functions which are conjugate, in the sense that
the differential equations they satisfy have the same $p\left( x\right) $ in
equation (\ref{Eqn0}), can always be combined as products in the same
integral. It is shown that any two equations can be transformed to be
mutually conjugate and transformation equations are given to calculate $%
q\left( x\right) $ for arbitrarily specified $p\left( x\right) $. It is
shown that the reverse process, finding a suitable $p\left( x\right) $ to
give a desired $q\left( x\right) $, is governed by a Riccati equation [5].
Both known and new integrals are derived in this section as examples of the
methods presented. Section 4 presents sample results for a selection of
special functions. Section 5 gives somewhat more than sample results for the
complete elliptic integrals of the first and second kinds, using the method
of fragmentary equations presented in Section 3. Table 1 gives the special
functions used. 
\begin{table}[tbph]
\begin{equation*}
\begin{tabular}{ll}
Symbol & Special Function \\ 
$\func{Ai}\left( x\right) $ & Airy function of the first kind \\ 
$\func{Bi}\left( x\right) $ & Airy function of the second kind \\ 
$\func{Ai}^{\prime }\left( x\right) ,\func{Bi}^{\prime }\left( x\right) $ & 
Derivatives of the Airy functions \\ 
$E(k)$ & Complete elliptic integral of the second kind \\ 
${}_{2}F_{1}\left( a,b;c;x\right) $ & Gauss hypergeometric function \\ 
$\func{Gi}\left( x\right) ,\func{Hi}\left( x\right) $ & Scorer functions \\ 
$I_{n}\left( x\right) $ & Modified Bessel function of the first kind \\ 
$J_{\nu }(x)$ & Bessel function of the first kind \\ 
$H_{n}\left( x\right) $ & Struve function of the first kind \\ 
$K(k)$ & Complete elliptic integral of the first kind \\ 
$K_{n}\left( x\right) $ & Modified Bessel function of the second kind \\ 
$P_{\nu }\left( x\right) $ & Legendre function of the first kind \\ 
$P_{\nu }^{\mu }\left( x\right) $ & Associated Legendre function of the
first kind \\ 
$Q_{\nu }\left( x\right) $ & Legendre function of the second kind \\ 
$Q_{\nu }^{\mu }\left( x\right) $ & Associated Legendre function of the
second kind \\ 
$s_{m,n}\left( x\right) $ & Lommel function \\ 
$Y_{\nu }\left( x\right) $ & Bessel function of the second kind \\ 
$\Gamma \left( x\right) $ & Gamma function%
\end{tabular}%
\end{equation*}%
\caption{Special Functions Used}
\end{table}

\section{Formulation}

Equation (\ref{Eqn0}) can be expressed in Lagrangian form as:%
\begin{equation}
\frac{d}{dx}\left( \frac{\partial \tciLaplace }{\partial y^{\prime }}\right)
-\frac{\partial \tciLaplace }{\partial y}=0  \label{Eqn2}
\end{equation}%
where the Lagrangian $\tciLaplace $ is given by%
\begin{equation}
\tciLaplace =f\left( x\right) \left( y^{\prime 2}\left( x\right) -q\left(
x\right) y^{2}\left( x\right) \right)  \label{Eqn3}
\end{equation}%
and the function $f\left( x\right) $ is a solution of the equation: 
\begin{equation}
\frac{f^{\prime }\left( x\right) }{f\left( x\right) }=p\left( x\right)
\label{Eqn4}
\end{equation}%
and hence is given by the integral (\ref{Eqn1a}). A principal property of
any Lagrangian is:%
\begin{equation}
\frac{d}{dx}\left( \tciLaplace -y^{\prime }\frac{\partial \tciLaplace }{%
\partial y^{\prime }}\right) =\frac{\partial \tciLaplace }{\partial x}
\label{Eqn6}
\end{equation}%
and from equation (\ref{Eqn3})\ we obtain%
\begin{equation}
\tciLaplace -y^{\prime }\frac{\partial \tciLaplace }{\partial y^{\prime }}%
=-f\left( x\right) \left( y^{\prime 2}\left( x\right) +q\left( x\right)
y^{2}\left( x\right) \right)  \label{Eqn7}
\end{equation}%
and%
\begin{equation}
\frac{\partial \tciLaplace }{\partial x}=f\left( x\right) ^{\prime
}y^{\prime 2}\left( x\right) -\left( f\left( x\right) q\left( x\right)
\right) ^{\prime }y^{2}\left( x\right) \text{.}  \label{Eqn8}
\end{equation}%
Substituting equations (\ref{Eqn7})\ and (\ref{Eqn8}) into equation (\ref%
{Eqn6}) gives immediately the integral%
\begin{equation}
\dint \left( \left( f\left( x\right) q\left( x\right) \right) ^{\prime
}y^{2}\left( x\right) -f^{\prime }\left( x\right) y^{\prime 2}\left(
x\right) \right) dx=f\left( x\right) \left( y^{\prime 2}\left( x\right)
+q\left( x\right) y\left( x\right) \right)  \label{Eqn9}
\end{equation}%
which is analogous to the energy integral in classical mechanics. This
integral can be considered to be complementary to equation (\ref{Eqn1}), as
it gives different integrals, but these results will be presented
separately. Explicitly evaluating both derivatives in equation (\ref{Eqn6})
using equations (\ref{Eqn7})\ and (\ref{Eqn8}) gives after cancellation and
collection of terms:%
\begin{equation}
-\frac{d}{dx}\left( f\left( x\right) y^{\prime }\left( x\right) \right)
=f\left( x\right) q\left( x\right) y\left( x\right)  \label{Eqn10}
\end{equation}%
and we obtain a second indefinite integral:%
\begin{equation}
\dint f\left( x\right) q\left( x\right) y\left( x\right) dx=-f\left(
x\right) y^{\prime }\left( x\right) \text{.}  \label{Eqn11}
\end{equation}%
The differential equation (\ref{Eqn1}) can be transformed by defining $%
y\left( x\right) =h\left( x\right) z\left( x\right) $ to give:%
\begin{equation}
z^{\prime \prime }+\left( 2\frac{h^{\prime }\left( x\right) }{h\left(
x\right) }+p\left( x\right) \right) z^{\prime }+\left( \frac{h^{\prime
\prime }\left( x\right) }{h\left( \left( x\right) \right) }+\frac{h^{\prime
}\left( x\right) }{h\left( x\right) }p\left( x\right) +q\left( x\right)
\right) z=0\text{.}  \label{Eqn12}
\end{equation}%
This transformed equation can also be put in the Lagrangian form%
\begin{equation}
\frac{d}{dx}\left( \frac{\partial \bar{\tciLaplace}}{\partial z^{\prime }}%
\right) -\frac{\partial \bar{\tciLaplace}}{\partial z}=0  \label{Eqn13}
\end{equation}%
where%
\begin{equation}
\bar{\tciLaplace}=\bar{f}\left( x\right) \left( z^{\prime 2}\left( x\right) -%
\bar{q}\left( x\right) z\left( x\right) \right)  \label{Eqn14}
\end{equation}%
and 
\begin{equation}
\bar{q}\left( x\right) =\frac{h^{\prime \prime }\left( x\right) }{h\left(
x\right) }+\frac{h^{\prime }\left( x\right) }{h\left( x\right) }p\left(
x\right) +q\left( x\right) \text{.}  \label{Eqn15}
\end{equation}%
The function $\bar{f}\left( x\right) $, the equivalent of $f\left( x\right) $
in equation (\ref{Eqn3}), is given by 
\begin{equation}
\bar{f}\left( x\right) =\exp \left( \dint \left( 2\frac{h^{\prime }\left(
x\right) }{h\left( x\right) }+p\left( x\right) \right) dx\right)
=h^{2}\left( x\right) f\left( x\right)  \label{Eqn16}
\end{equation}%
so that the new Lagrangian can be expressed as%
\begin{equation}
\bar{\tciLaplace}=h^{2}\left( x\right) f\left( x\right) \left( z^{\prime
2}-\left( \frac{h^{\prime \prime }\left( x\right) }{h\left( x\right) }+\frac{%
h^{\prime }\left( x\right) }{h\left( x\right) }p\left( x\right) +q\left(
x\right) \right) z^{2}\left( x\right) \right) \text{.}  \label{Eqn17}
\end{equation}%
The equivalent of the integral (\ref{Eqn11}) for the transformed equation is%
\begin{equation}
\dint h^{2}\left( x\right) f\left( x\right) \left( \frac{h^{\prime \prime
}\left( x\right) }{h\left( x\right) }+\frac{h^{\prime }\left( x\right) }{%
h\left( x\right) }p\left( x\right) +q\left( x\right) \right) z\left(
x\right) dx=-h^{2}\left( x\right) f\left( x\right) z^{\prime }\left( x\right)
\label{Eqn18}
\end{equation}%
and since $z\left( x\right) =y\left( x\right) /h\left( x\right) $ and hence 
\begin{equation}
z^{\prime }\left( x\right) =\frac{y^{\prime }\left( x\right) }{h\left(
x\right) }-\frac{h^{\prime }\left( x\right) y\left( x\right) }{h^{2}\left(
x\right) }  \label{Eqn19}
\end{equation}%
then equation (\ref{Eqn18} ) reduces to equation (\ref{Eqn1}). This relation
holds for all homogeneous second-order linear ordinary differential
equations for an arbitrary function $h\left( x\right) $. Once known, this
result can be proven very simply without reference to variational calculus.

\begin{theorem}
\begin{equation*}
\dint f\left( x\right) \left( h^{\prime \prime }\left( x\right) +h^{\prime
}\left( x\right) p\left( x\right) +h\left( x\right) q\left( x\right) \right)
y\left( x\right) dx=
\end{equation*}%
\begin{equation}
f\left( x\right) \left( h^{\prime }\left( x\right) y\left( x\right) -h\left(
x\right) y^{\prime }\left( x\right) \right)  \label{Eqn20}
\end{equation}%
where $f\left( x\right) $ and $y\left( x\right) $ obey the respective
differential equations:%
\begin{equation}
f^{\prime }\left( x\right) =p\left( x\right) f\left( x\right)  \label{Eqn21}
\end{equation}%
\begin{equation}
y^{\prime \prime }\left( x\right) +p\left( x\right) y^{\prime }\left(
x\right) +q\left( x\right) y\left( x\right) =0  \label{Eqn22}
\end{equation}%
and $h\left( x\right) $, $p\left( x\right) $ and $q\left( x\right) $ are
arbitrary functions.
\end{theorem}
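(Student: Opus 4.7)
The plan is to prove the identity by differentiating the proposed antiderivative on the right-hand side and showing that it equals the integrand on the left-hand side. Since both sides agree up to an additive constant, this is all that is required for an indefinite-integral identity.

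First, I would apply the product rule to
$$\frac{d}{dx}\Bigl[f(x)\bigl(h'(x)y(x) - h(x)y'(x)\bigr)\Bigr],$$
writing it as $f'(x)\bigl(h'y - hy'\bigr) + f(x)\bigl(h''y + h'y' - h'y' - hy''\bigr)$. The cross terms $h'y'$ cancel immediately, leaving
$$f'(x)\bigl(h'(x)y(x) - h(x)y'(x)\bigr) + f(x)\bigl(h''(x)y(x) - h(x)y''(x)\bigr).$$

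Next I would invoke the two hypotheses. From equation (\ref{Eqn21}) I substitute $f'(x) = p(x)f(x)$, and from equation (\ref{Eqn22}) I substitute $y''(x) = -p(x)y'(x) - q(x)y(x)$. After these replacements, the expression becomes
$$p(x)f(x)h'(x)y(x) - p(x)f(x)h(x)y'(x) + f(x)h''(x)y(x) + f(x)h(x)\bigl(p(x)y'(x) + q(x)y(x)\bigr).$$
The two terms containing $p(x)f(x)h(x)y'(x)$ have opposite signs and cancel, leaving precisely $f(x)\bigl(h''(x) + p(x)h'(x) + q(x)h(x)\bigr)y(x)$, which is the integrand.

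There is essentially no obstacle here; the entire argument is a direct verification and the only minor care needed is tracking signs when substituting for $y''$, and noticing the two algebraic cancellations (the $h'y'$ terms from differentiation, and the $pfhy'$ terms after substitution). No hypothesis on $h$ beyond being twice differentiable is used, consistent with the claim that $h$ is arbitrary. This elementary verification is exactly the ``simpler proof'' foreshadowed at the end of Section 2, bypassing the Euler--Lagrange derivation entirely.
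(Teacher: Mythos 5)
Your proposal is correct and is essentially identical to the paper's own proof: both differentiate $f\left( x\right) \left( h^{\prime }\left( x\right) y\left( x\right) -h\left( x\right) y^{\prime }\left( x\right) \right) $ by the product rule, eliminate $f^{\prime }\left( x\right) $ and $y^{\prime \prime }\left( x\right) $ via equations (\ref{Eqn21}) and (\ref{Eqn22}), and observe the cancellations that leave $f\left( x\right) \left( h^{\prime \prime }\left( x\right) +h^{\prime }\left( x\right) p\left( x\right) +h\left( x\right) q\left( x\right) \right) y\left( x\right) $. No gaps.
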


\begin{proof}
Arbitrary functions $f\left( x\right) $, $h\left( x\right) $ and $y\left(
x\right) $ satisfy the differential identity: 
\begin{equation}
\frac{d}{dx}\left( f\left( x\right) \left( h^{\prime }\left( x\right)
y\left( x\right) -h\left( x\right) y^{\prime }\left( x\right) \right)
\right) =f^{\prime }\left( x\right) \left( h^{\prime }\left( x\right)
y\left( x\right) -h\left( x\right) y^{\prime }\left( x\right) \right)
\label{Eqn23}
\end{equation}%
\begin{equation*}
+f\left( x\right) \left( h^{\prime \prime }\left( x\right) y\left( x\right)
-h\left( x\right) y^{\prime \prime }\left( x\right) \right) \text{.}
\end{equation*}%
Eliminating $f^{\prime }\left( x\right) $ and $y^{\prime \prime }\left(
x\right) $ from equation (\ref{Eqn23}) using equations (\ref{Eqn21})\ and (%
\ref{Eqn22}), respectively, gives%
\begin{equation*}
\frac{d}{dx}\left( f\left( x\right) \left( h^{\prime }\left( x\right)
y\left( x\right) -h\left( x\right) y^{\prime }\left( x\right) \right)
\right) =
\end{equation*}%
\begin{equation}
f\left( x\right) \left( h^{\prime \prime }\left( x\right) +h^{\prime }\left(
x\right) p\left( x\right) +h\left( x\right) q\left( x\right) \right) y\left(
x\right)  \label{Eqn24}
\end{equation}%
for arbitrary $h\left( x\right) $ and integration of equation (\ref{Eqn24})
gives (\ref{Eqn20}) and the theorem is proven. Equation (\ref{Eqn21}) can be
integrated immediately to give%
\begin{equation}
f\left( x\right) =\exp \left( \dint p\left( x\right) dx\right) \text{.}
\label{Eqn25}
\end{equation}
\end{proof}

In equation (\ref{Eqn1}) we can take $y\left( x\right) \equiv y_{1}\left(
x\right) $ to be any solution of equation (\ref{Eqn0}) and $h\left( x\right)
=y_{2}\left( x\right) $ to be another solution, to obtain the Wronskian for
the differential equation (\ref{Eqn0}) as: 
\begin{equation}
W\left( x\right) \equiv 
\begin{vmatrix}
y_{1}\left( x\right) & y_{2}\left( x\right) \\ 
y_{1}^{\prime }\left( x\right) & y_{2}^{\prime }\left( x\right)%
\end{vmatrix}%
=\frac{A}{f\left( x\right) }  \label{Eqn26}
\end{equation}%
where $A$ is a constant. From equation (\ref{Eqn1a}) this is equivalent to
Abel's formula [6,7] for the Wronskian:%
\begin{equation}
W\left( x\right) =W\left( x_{0}\right) \exp \left(
-\dint\limits_{x_{0}}^{x}p\left( x\right) dx\right) \text{.}  \label{Eqn27}
\end{equation}

\section{Techniques for finding indefinite integrals}

Equation (\ref{Eqn1}) yields an infinite number of indefinite integrals for
each solution $y\left( x\right) $ of equation (\ref{Eqn1}), as any $h\left(
x\right) $ yields an integral. In equation (\ref{Eqn1}) we can take $h\left(
x\right) =1$ or substitute any elementary function such as $e^{ax}$ or $%
e^{-ax^{2}}$ and obtain an integral for any special function. Expressions
involving elementary functions, such as $h\left( x\right) =x^{m}$ and $%
h\left( x\right) =x^{m}\QATOPD\{ \} {\sin x}{\cos x}$ often yield integrals
which are interesting and new. We can also specify $h\left( x\right) $ to be
the solution to a differential equation where one or two terms in equation (%
\ref{Eqn1}) is deleted, such as:%
\begin{equation}
h^{\prime \prime }\left( x\right) +p\left( x\right) h^{\prime }\left(
x\right) =0  \label{Eqn28}
\end{equation}%
\begin{equation}
p\left( x\right) h^{\prime }\left( x\right) +q\left( x\right) h\left(
x\right) =0  \label{Eqn29}
\end{equation}%
\begin{equation}
h^{\prime \prime }\left( x\right) +q\left( x\right) h\left( x\right) =0\text{%
.}  \label{Eqn30}
\end{equation}%
It is frequently the case that $p\left( x\right) $ or $q\left( x\right) $
may consist of several terms, and $h\left( x\right) $ can be specified as
the solution of a differential equation where some of these terms have been
deleted. These equations with one or more terms deleted will be referred to
as fragmentary equations.

\subsection{Integration of the function $y\left( x\right) $ itself.}

To integrate a solution $y\left( x\right) $ of equation (\ref{Eqn1}) it is
necessary to specify $h\left( x\right) $ to be a solution of the
inhomogeneous equation:%
\begin{equation}
h^{\prime \prime }\left( x\right) +p\left( x\right) h^{\prime }\left(
x\right) +q\left( x\right) h\left( x\right) =\frac{1}{f\left( x\right) }.
\label{Eqn31}
\end{equation}%
For the Legendre equation:%
\begin{equation}
y^{\prime \prime }\left( x\right) -\frac{2x}{1-x^{2}}y^{\prime }\left(
x\right) +\frac{\nu \left( \nu +1\right) }{1-x^{2}}y\left( x\right) =0
\label{Eqn32}
\end{equation}%
then%
\begin{equation}
f\left( x\right) \equiv \exp \left( -\dint \frac{2x}{1-x^{2}}dx\right)
=1-x^{2}  \label{Eqn33}
\end{equation}%
and equation (\ref{Eqn31}) becomes%
\begin{equation}
h^{\prime \prime }\left( x\right) -\frac{2x}{1-x^{2}}h^{\prime }\left(
x\right) +\frac{\nu \left( \nu +1\right) }{1-x^{2}}h\left( x\right) =\frac{1%
}{1-x^{2}}  \label{Eqn34}
\end{equation}%
which has a simple solution%
\begin{equation}
h\left( x\right) =\frac{1}{\nu \left( \nu +1\right) }\text{.}  \label{Eqn35}
\end{equation}%
Substituting (\ref{Eqn35}) into equation (\ref{Eqn1}) for Legendre's
equation gives%
\begin{equation}
\dint \QATOPD\{ \} {P_{\nu }\left( x\right) }{Q_{\nu }\left( x\right) }dx=%
\frac{x^{2}-1}{\nu \left( \nu +1\right) }\QATOPD\{ \} {P_{\nu }^{\prime
}\left( x\right) }{Q_{\nu }^{\prime }\left( x\right) }  \label{Eqn36}
\end{equation}%
and the Legendre recurrence relations [1]:%
\begin{equation}
\left( x^{2}-1\right) \QATOPD\{ \} {P_{\nu }^{\prime }\left( x\right)
}{Q_{\nu }^{\prime }\left( x\right) }=\left( \nu +1\right) \QATOPD\{ \}
{P_{\nu +1}\left( x\right) -xP_{\nu }\left( x\right) }{Q_{\nu +1}\left(
x\right) -xQ_{\nu }\left( x\right) }  \label{Eqn37}
\end{equation}%
\begin{equation}
\left( x^{2}-1\right) \QATOPD\{ \} {P_{\nu }^{\prime }\left( x\right)
}{Q_{\nu }^{\prime }\left( x\right) }=\nu \QATOPD\{ \} {xP_{\nu }\left(
x\right) -P_{\nu -1}\left( x\right) }{xQ_{\nu }\left( x\right) -Q_{\nu
-1}\left( x\right) }  \label{Eqn38}
\end{equation}%
then give the alternative forms%
\begin{equation}
\dint \QATOPD\{ \} {P_{\nu }\left( x\right) }{Q_{\nu }\left( x\right) }dx=%
\frac{1}{\nu }\QATOPD\{ \} {P_{\nu +1}\left( x\right) -xP_{\nu }\left(
x\right) }{Q_{\nu +1}\left( x\right) -xQ_{\nu }\left( x\right) }
\label{Eqn39}
\end{equation}%
\begin{equation}
\dint \QATOPD\{ \} {P_{\nu }\left( x\right) }{Q_{\nu }\left( x\right) }dx=%
\frac{1}{\nu +1}\QATOPD\{ \} {xP_{\nu }\left( x\right) -P_{\nu -1}\left(
x\right) }{xQ_{\nu }\left( x\right) -Q_{\nu -1}\left( x\right) }\text{.}
\label{Eqn40}
\end{equation}%
The Legendre equation above is a simple case, and sometimes the solution to
equation (\ref{Eqn31}) is given in terms of functions specifically defined
to satisfy this inhomogeneous equation . Examples are the Lommel [8,9,] and
Struve [9] functions for Bessel's equation and the Scorer functions for
Airy's equation [2,10]. The Bessel equation is%
\begin{equation}
y^{\prime \prime }\left( x\right) +\frac{1}{x}y^{\prime }\left( x\right)
+\left( 1-\frac{n^{2}}{x^{2}}\right) y\left( x\right) =0  \label{Eqn41}
\end{equation}%
and has $f\left( x\right) $ given by%
\begin{equation}
f\left( x\right) \equiv \exp \left( \dint \frac{dx}{x}\right) =x\text{.}
\label{Eqn42}
\end{equation}%
The Lommel function $s_{m,n}\left( x\right) $ satisfies the equation 
\begin{equation}
y^{\prime \prime }\left( x\right) +\frac{1}{x}y^{\prime }\left( x\right)
+\left( 1-\frac{n^{2}}{x^{2}}\right) y\left( x\right) =x^{m-1}  \label{Eqn43}
\end{equation}%
with equation (\ref{Eqn31}) being the special case for $m=0$. Taking $%
h\left( x\right) =s_{m,n}\left( x\right) $ in equation (\ref{Eqn1}) gives%
\begin{equation}
\dint x^{m}J_{n}\left( x\right) dx=x\left( s_{m,n}^{\prime }\left( x\right)
J_{n}\left( x\right) -s_{m,n}\left( x\right) J_{n}^{\prime }\left( x\right)
\right)  \label{Eqn44}
\end{equation}%
which from the Lommel and Bessel recurrence relations [1]:%
\begin{equation}
s_{m,n}^{\prime }\left( x\right) =\left( m+n-1\right) s_{m-1,n-1}\left(
x\right) J_{n}\left( x\right) -\frac{n}{x}s_{m,n}\left( x\right) J_{n}\left(
x\right)  \label{Eqn45}
\end{equation}%
\begin{equation}
\frac{n}{x}J_{n}\left( x\right) +J_{n}^{\prime }\left( x\right)
=J_{n-1}\left( x\right)  \label{Eqn46}
\end{equation}%
reduces to%
\begin{equation}
x\left( s_{m,n}^{\prime }\left( x\right) J_{n}\left( x\right) -s_{m,n}\left(
x\right) J_{n}^{\prime }\left( x\right) \right)  \label{Eqn47}
\end{equation}%
and gives%
\begin{equation}
\dint x^{m}J_{n}\left( x\right) dx=x\left( \left( m+n-1\right)
s_{m-1,n-1}\left( x\right) J_{n}\left( x\right) -s_{m,n}\left( x\right)
J_{n-1}\left( x\right) \right) \text{.}  \label{Eqn48}
\end{equation}%
Watson [9] gives a rather different derivation of this result. For the
special case where $m=n$, substituting the identities [1]%
\begin{equation}
s_{n,n}\left( x\right) =\Gamma \left( n+1/2\right) \sqrt{\pi }2^{n-1}\mathbf{%
H}_{n}\left( x\right)  \label{Eqn49}
\end{equation}%
\begin{equation}
\left( n-\frac{1}{2}\right) \Gamma \left( n-\frac{1}{2}\right) =\Gamma
\left( n+\frac{1}{2}\right)  \label{Eqn50}
\end{equation}%
into equation (\ref{Eqn48}) gives the integral in terms of the Struve
function $\mathbf{H}_{n}\left( x\right) $ as: 
\begin{equation}
\dint x^{n}J_{n}\left( x\right) dx=\sqrt{\pi }2^{n-1}\Gamma \left(
n+1/2\right) x\left( \mathbf{H}_{n-1}\left( x\right) J_{n}\left( x\right) -%
\mathbf{H}_{n}\left( x\right) J_{n-1}\left( x\right) \right) \text{.}
\label{Eqn51}
\end{equation}%
The Airy functions are a similar case to that of the Bessel functions
described above. The Airy functions $\func{Ai}\left( x\right) $ and $\func{Bi%
}\left( x\right) $ obey the differential equation:%
\begin{equation}
y^{\prime \prime }\left( x\right) -xy\left( x\right) =0  \label{Eqn51a}
\end{equation}%
and the Scorer functions $\func{Gi}\left( x\right) $ and $\func{Hi}\left(
x\right) $ obey the inhomogeneous Airy equations [10]:%
\begin{equation}
y^{\prime \prime }\left( x\right) -xy\left( x\right) =\mp \frac{1}{\pi }
\label{Eqn51b}
\end{equation}%
where the negative sign is taken for $\func{Gi}\left( x\right) $ and the
positive sign for $\func{Hi}\left( x\right) $. Taking $h\left( x\right) $ in
equation (\ref{Eqn1}) to be either $\func{Gi}\left( x\right) $ or $\func{Hi}%
\left( x\right) $ gives the alternative integrals, which of course differ
only by a constant:%
\begin{equation}
\dint \QATOPD\{ \} {\func{Ai}\left( x\right) }{\func{Bi}\left( x\right)
}dx=\pi \QATOPD\{ \} {\func{Ai}^{\prime }\left( x\right) \func{Gi}\left(
x\right) -\func{Ai}\left( x\right) \func{Gi}^{\prime }\left( x\right) }{%
\func{Bi}^{\prime }\left( x\right) \func{Gi}\left( x\right) -\func{Bi}\left(
x\right) \func{Gi}^{\prime }\left( x\right) }  \label{Eqn51c}
\end{equation}%
\begin{equation}
\dint \QATOPD\{ \} {\func{Ai}\left( x\right) }{\func{Bi}\left( x\right)
}dx=\pi \QATOPD\{ \} {\func{Ai}\left( x\right) \func{Hi}^{\prime }\left(
x\right) -\func{Ai}^{\prime }\left( x\right) \func{Hi}\left( x\right) }{%
\func{Bi}\left( x\right) \func{Hi}^{\prime }\left( x\right) -\func{Bi}%
^{\prime }\left( x\right) \func{Hi}\left( x\right) }\text{.}  \label{Eqn51d}
\end{equation}

\subsection{Conjugate differential equations}

Two differential equations of the form (\ref{Eqn0}) are considered conjugate
if they have the same $p\left( x\right) $, and hence the same $f\left(
x\right) $, but different $q\left( x\right) $. For example, the equation 
\begin{equation}
y^{\prime \prime }\left( x\right) +\frac{1}{x}y^{\prime }\left( x\right) +%
\frac{1}{1-x^{2}}y\left( x\right) =0  \label{Eqn52}
\end{equation}%
is conjugate to the Bessel equation (\ref{Eqn41}) and has as a general
solution [1] the complete elliptic integral expression%
\begin{equation}
y\left( x\right) =C_{1}\mathbf{E}\left( x\right) +C_{2}\left( \mathbf{E}%
\left( x^{\prime }\right) -\mathbf{K}\left( x^{\prime }\right) \right)
\label{Eqn53}
\end{equation}%
where $x^{\prime }\equiv \sqrt{1-x^{2}}$ is the complementary modulus. The
Bessel equation (\ref{Eqn41})\ can be slightly generalized as 
\begin{equation}
y^{\prime \prime }\left( x\right) +\frac{1}{x}y^{\prime }\left( x\right)
+\left( \alpha ^{2}-\frac{n^{2}}{x^{2}}\right) y\left( x\right) =0
\label{Eqn54}
\end{equation}%
which has the solution $y\left( x\right) =C_{1}J_{n}\left( \alpha x\right)
+C_{2}Y_{n}\left( \alpha x\right) $ and two equations of the form (\ref%
{Eqn54}) are mutually conjugate unless both $\alpha $ and $n$ are the same
for both. The modified Bessel equation 
\begin{equation}
y^{\prime \prime }\left( x\right) +\frac{1}{x}y^{\prime }\left( x\right)
-\left( \alpha ^{2}+\frac{n^{2}}{x^{2}}\right) y\left( x\right) =0
\label{Eqn55}
\end{equation}%
with solution $y\left( x\right) \allowbreak =C_{1}I_{n}\left( \alpha
x\right) +C_{2}K_{n}\left( \alpha x\right) $ is always conjugate to equation
(\ref{Eqn54}).

In equation (\ref{Eqn1}) we can take $h\left( x\right) $ to be any solution
of a conjugate equation with $\bar{q}\left( x\right) $ instead of $q\left(
x\right) $. We have%
\begin{equation}
h^{\prime \prime }\left( x\right) +p\left( x\right) h^{\prime }\left(
x\right) =-\bar{q}\left( x\right) h  \label{Eqn56}
\end{equation}%
and hence 
\begin{equation}
h^{\prime \prime }\left( x\right) +p\left( x\right) h^{\prime }\left(
x\right) +q\left( x\right) h\left( x\right) =\left( q\left( x\right) -\bar{q}%
\left( x\right) \right) h\left( x\right)  \label{Eqn57}
\end{equation}%
so that equation (\ref{Eqn1}) gives the integral%
\begin{equation}
\dint f\left( x\right) \left( q\left( x\right) -\bar{q}\left( x\right)
\right) h\left( x\right) y\left( x\right) dx=f\left( x\right) \left(
h^{\prime }\left( x\right) y\left( x\right) -h\left( x\right) y^{\prime
}\left( x\right) \right)  \label{Eqn58}
\end{equation}%
If in this equation we take $y\left( x\right) =$ $Z_{n}\left( \alpha
x\right) $, where $Z_{n}\left( \alpha x\right) \equiv aJ_{n}\left( \alpha
x\right) +bY_{n}\left( x\right) $ is any solution of the Bessel equation (%
\ref{Eqn54}), and $h\left( x\right) =$ $Z_{m}\left( \beta x\right) $ is any
solution of the Bessel Equation

\begin{equation}
h^{\prime \prime }\left( x\right) +\frac{1}{x}h^{\prime }\left( x\right)
+\left( \beta ^{2}-\frac{m^{2}}{x^{2}}\right) h\left( x\right) =0
\label{Eqn59}
\end{equation}%
then equation (\ref{Eqn58}) gives the well-known Bessel integral [1]:%
\begin{equation*}
\dint \left( \left( \alpha ^{2}-\beta ^{2}\right) x-\frac{n^{2}-m^{2}}{x}%
\right) Z_{n}\left( \alpha x\right) Z_{m}\left( \beta x\right) dx=
\end{equation*}%
\begin{equation}
x\left( \beta Z_{m}^{\prime }\left( \beta x\right) Z_{n}\left( \alpha
x\right) -Z_{m}\left( \beta x\right) Z_{n}^{\prime }\left( \alpha x\right)
\right) \text{.}  \label{Eqn60}
\end{equation}%
Similarly, taking $y\left( x\right) $ to be any solution of equation (\ref%
{Eqn54}) and $h\left( x\right) =\bar{Z}_{m}\left( \beta x\right) $, where $%
\bar{Z}_{m}\left( \beta x\right) \equiv AI_{n}\left( \beta x\right)
+BK_{n}\left( \beta x\right) $ is any solution of the modified Bessel
equation:%
\begin{equation}
h^{\prime \prime }\left( x\right) +\frac{1}{x}h^{\prime }\left( x\right)
+\left( \beta ^{2}-\frac{m^{2}}{x^{2}}\right) h\left( x\right) =0
\label{Eqn61}
\end{equation}%
gives the integral%
\begin{equation*}
\dint \left( \left( \alpha ^{2}+\beta ^{2}\right) x-\frac{n^{2}-m^{2}}{x}%
\right) Z_{n}\left( \alpha x\right) \bar{Z}_{m}\left( \beta x\right) dx=
\end{equation*}%
\begin{equation}
x\left( \beta \bar{Z}_{m}^{\prime }\left( \beta x\right) Z_{n}\left( \alpha
x\right) -\bar{Z}_{m}\left( \beta x\right) Z_{n}^{\prime }\left( \alpha
x\right) \right) \text{.}  \label{Eqn62}
\end{equation}%
In equation (\ref{Eqn58}) we can take $y\left( x\right) =Z_{n}\left( \alpha
x\right) $ and $h\left( x\right) =\mathbf{E}\left( x\right) $, a solution of
the conjugate equation (\ref{Eqn52}), to obtain%
\begin{equation*}
\dint x\left( \alpha ^{2}-\frac{n^{2}}{x^{2}}-\frac{1}{x^{\prime 2}}\right)
Z_{n}\left( \alpha x\right) \mathbf{E}\left( x\right) dx=
\end{equation*}%
\begin{equation}
x\left( \frac{d\mathbf{E}\left( x\right) }{dx}Z_{n}\left( \alpha x\right) -%
\mathbf{E}\left( x\right) \frac{dZ_{n}\left( \alpha x\right) }{dx}\right)
\label{Eqn63}
\end{equation}%
where [1]:%
\begin{equation}
\frac{d\mathbf{E}\left( x\right) }{dx}=\frac{\mathbf{E}\left( x\right) -%
\mathbf{K}\left( x\right) }{x}\text{.}  \label{Eqn64}
\end{equation}%
A particularly simple special case of equation (\ref{Eqn64}) is%
\begin{equation}
\dint \frac{x^{3}}{x^{\prime 2}}J_{0}\left( x\right) \mathbf{E}\left(
x\right) dx=J_{0}\left( x\right) \left( \mathbf{K}\left( x\right) -\mathbf{E}%
\left( x\right) \right) -xJ_{1}\left( x\right) \mathbf{E}\left( x\right) 
\text{.}  \label{Eqn65}
\end{equation}

\subsection{Transformations of the differential equations}

Any differential equation of the form (\ref{Eqn0}) can be made conjugate to
any other by a change of dependent variable. This allows integrals to be
obtained containing a product of any two specified special functions,
provided they both satisfy differential equations of the form (\ref{Eqn0}).
The differential equation 
\begin{equation}
z^{\prime \prime }\left( x\right) +\bar{p}\left( x\right) z^{\prime }\left(
x\right) +\bar{q}\left( x\right) z\left( x\right) =0  \label{Eqn66}
\end{equation}%
can be made conjugate to equation (\ref{Eqn0}) by the transformation $%
y\left( x\right) =g\left( x\right) z\left( x\right) $ which yields the
equation%
\begin{equation}
y^{\prime \prime }\left( x\right) +\left( \frac{2g^{\prime }\left( x\right) 
}{g\left( x\right) }+\bar{p}\left( x\right) \right) y^{\prime }\left(
x\right) +\left( \frac{g^{\prime \prime }\left( x\right) }{g\left( x\right) }%
+\frac{g^{\prime }\left( x\right) }{g\left( x\right) }p\left( x\right) +\bar{%
q}\left( x\right) \right) y\left( x\right) =0  \label{Eqn67}
\end{equation}%
and we can choose 
\begin{equation}
\frac{2g^{\prime }\left( x\right) }{g\left( x\right) }+\bar{p}\left(
x\right) =p\left( x\right)  \label{Eqn68}
\end{equation}%
so that 
\begin{equation}
g\left( x\right) =\exp \left( \frac{1}{2}\dint \left( p\left( x\right) -\bar{%
p}\left( x\right) \right) dx\right) \text{.}  \label{Eqn69}
\end{equation}%
From equation (\ref{Eqn69}) we obtain the relation:%
\begin{equation}
g\left( x\right) =\sqrt{\frac{f\left( x\right) }{\bar{f}\left( x\right) }%
\text{.}}  \label{Eqn70}
\end{equation}%
The coefficient of $y\left( x\right) $ in equation (\ref{Eqn67}) can be
evaluated directly from $g\left( x\right) $ but it is usually more
convenient to note that%
\begin{equation}
\frac{d}{dx}\left( \frac{g^{\prime }\left( x\right) }{g\left( x\right) }%
\right) =\frac{g^{\prime \prime }\left( x\right) }{g\left( x\right) }-\left( 
\frac{g^{\prime }\left( x\right) }{g\left( x\right) }\right) ^{2}
\label{Eqn71}
\end{equation}%
and 
\begin{equation}
\frac{g^{\prime }\left( x\right) }{g\left( x\right) }=\frac{1}{2}\left(
p\left( x\right) -\bar{p}\left( x\right) \right)  \label{Eqn72}
\end{equation}%
so that equation (\ref{Eqn67}) becomes%
\begin{equation}
y^{\prime \prime }\left( x\right) +p\left( x\right) y^{\prime }\left(
x\right) +\left( \frac{1}{2}\left( p\left( x\right) -\bar{p}\left( x\right)
\right) ^{\prime }+\frac{p^{2}\left( x\right) -\bar{p}^{2}\left( x\right) }{4%
}+\bar{q}\left( x\right) \right) y\left( x\right) =0\text{.}  \label{Eqn73}
\end{equation}%
For example, the equation satisfied by the complete elliptic integral of the
first kind $\mathbf{K}\left( x\right) $ is%
\begin{equation}
z^{\prime \prime }\left( x\right) +\left( \frac{1}{x}-\frac{2x}{1-x^{2}}%
\right) z^{\prime }\left( x\right) -\frac{1}{1-x^{2}}z\left( x\right) =0
\label{Eqn74}
\end{equation}%
which has $\bar{f}\left( x\right) =x\left( 1-x^{2}\right) $ and general
solution [1]:%
\begin{equation}
z\left( x\right) =C_{1}\mathbf{K}\left( x\right) +C_{2}\mathbf{K}\left(
x^{\prime }\right) \text{.}  \label{Eqn75}
\end{equation}%
The transformed equation conjugate with the Bessel equation is given by
equation (\ref{Eqn73}) as%
\begin{equation}
y^{\prime \prime }\left( x\right) +\frac{1}{x}y^{\prime }\left( x\right) +%
\frac{1}{\left( 1-x^{2}\right) ^{2}}y\left( x\right) =0  \label{Eqn76}
\end{equation}%
and from the relation (\ref{Eqn70}) the general solution of equation (\ref%
{Eqn76}) is given by%
\begin{equation}
y\left( x\right) =C_{1}x^{\prime }\mathbf{K}\left( x\right) +C_{2}x^{\prime }%
\mathbf{K}\left( x^{\prime }\right) \text{.}  \label{Eqn77}
\end{equation}%
In equation (\ref{Eqn58}) we can take $y\left( x\right) $ to be $Z_{n}\left(
\alpha x\right) $, the general solution of the Bessel equation (\ref{Eqn54})
and specify $h\left( x\right) =x^{\prime }\mathbf{K}\left( x\right) $. This
gives the integral%
\begin{equation*}
\dint xx^{\prime }\left( \alpha ^{2}-\frac{n^{2}}{x^{2}}-\frac{1}{x^{\prime
4}}\right) Z_{n}\left( \alpha x\right) \mathbf{K}\left( x\right) dx=
\end{equation*}%
\begin{equation}
x\left( Z_{n}\left( \alpha x\right) \frac{d}{dx}\left( x^{\prime }\mathbf{K}%
\left( x\right) \right) -x^{\prime }\mathbf{K}\left( x\right) \frac{%
dZ_{n}\left( \alpha x\right) }{dx}\right)  \label{Eqn78}
\end{equation}%
where [1]:%
\begin{equation}
\frac{d\mathbf{K}\left( x\right) }{dx}=\frac{\mathbf{E}\left( x\right) }{%
xx^{\prime 2}}-\frac{\mathbf{K}\left( x\right) }{x}\text{.}  \label{Eqn79}
\end{equation}%
A special case of equation (\ref{Eqn78}) is%
\begin{equation}
\dint \frac{x^{3}\left( 2-x^{2}\right) }{x^{\prime 3}}\QATOPD\{ \}
{J_{0}\left( x\right) }{Y_{0}\left( x\right) }\mathbf{K}\left( x\right)
dx=\QATOPD\{ \} {J_{0}\left( x\right) }{Y_{0}\left( x\right) }\frac{\mathbf{K%
}\left( x\right) -\mathbf{E}\left( x\right) }{x^{\prime }}-xx^{\prime
}\QATOPD\{ \} {J_{1}\left( x\right) }{Y_{1}\left( x\right) }\mathbf{K}\left(
x\right) \text{.}  \label{Eqn80}
\end{equation}%
Employing the conjugate equations (\ref{Eqn52}) and (\ref{Eqn76}) in
equation (\ref{Eqn58}), with $y\left( x\right) =\mathbf{E}\left( x\right) $
and $h\left( x\right) =x^{\prime }\mathbf{K}\left( x\right) $ gives the
integral 
\begin{equation}
\dint \left( \frac{x}{x^{\prime }}\right) ^{3}\mathbf{E}\left( x\right) 
\mathbf{K}\left( x\right) dx=\left( \mathbf{K}\left( x\right) -\mathbf{E}%
\left( x\right) \right) \left( \frac{\mathbf{E}\left( x\right) }{x^{\prime }}%
-x^{\prime }\mathbf{K}\left( x\right) \right) \text{.}  \label{Eqn81}
\end{equation}%
If the common $p\left( x\right) $ of two conjugate differential equations is
transformed such that $p\left( x\right) \rightarrow \hat{p}\left( x\right) $
and $f\left( x\right) \rightarrow \bar{f}\left( x\right) $ for both
equations, the integral given by equation (\ref{Eqn58}) remains unchanged.
This can be shown by applying the transformations $\bar{y}\left( x\right)
=g\left( x\right) y\left( x\right) $ and $\bar{h}\left( x\right) =g\left(
x\right) h\left( x\right) $ to the conjugate equations: 
\begin{equation}
y^{\prime \prime }\left( x\right) +p\left( x\right) y^{\prime }\left(
x\right) +q\left( x\right) y\left( x\right) =0  \label{Eqn82}
\end{equation}%
\begin{equation}
h^{\prime \prime }\left( x\right) +p\left( x\right) h^{\prime }\left(
x\right) +\bar{q}\left( x\right) h\left( x\right) =0  \label{Eqn83}
\end{equation}%
to obtain the still conjugate differential equations: 
\begin{equation}
y^{\prime \prime }\left( x\right) +\hat{p}\left( x\right) y^{\prime }\left(
x\right) +q_{1}\left( x\right) y\left( x\right) =0  \label{Eqn84}
\end{equation}%
\begin{equation}
\bar{h}^{\prime \prime }\left( x\right) +\hat{p}\left( x\right) \bar{h}%
^{\prime }\left( x\right) +\bar{q}_{1}\left( x\right) \bar{h}\left( x\right)
=0  \label{Eqn85}
\end{equation}%
where from equation (\ref{Eqn73})%
\begin{equation}
q_{1}\left( x\right) =\frac{1}{2}\left( \hat{p}\left( x\right) -p\left(
x\right) \right) ^{\prime }+\frac{\hat{p}^{2}\left( x\right) -p^{2}\left(
x\right) }{4}+q\left( x\right)  \label{Eqn86}
\end{equation}%
\begin{equation}
\bar{q}_{1}\left( x\right) =\frac{1}{2}\left( \hat{p}\left( x\right)
-p\left( x\right) \right) ^{\prime }+\frac{\hat{p}^{2}\left( x\right)
-p^{2}\left( x\right) }{4}+\bar{q}\left( x\right)  \label{Eqn87}
\end{equation}%
and hence%
\begin{equation}
q_{1}\left( x\right) -\bar{q}_{1}\left( x\right) =q\left( x\right) -\bar{q}%
\left( x\right) \text{.}  \label{Eqn88}
\end{equation}%
From equation (\ref{Eqn70}) the general solutions of the differential
equations (\ref{Eqn84}) and (\ref{Eqn85}) are related to the general
solutions of equations (\ref{Eqn82}) and (\ref{Eqn83}), respectively, by:%
\begin{equation}
\bar{y}\left( x\right) =\sqrt{\frac{f\left( x\right) }{\bar{f}\left(
x\right) }}y\left( x\right)  \label{Eqn89}
\end{equation}%
\begin{equation}
\bar{h}\left( x\right) =\sqrt{\frac{f\left( x\right) }{\bar{f}\left(
x\right) }}h\left( x\right)  \label{Eqn90}
\end{equation}%
Substituting equations (\ref{Eqn88})-(\ref{Eqn90}) into equation (\ref{Eqn58}%
) yields after cancellation%
\begin{equation}
\dint f\left( x\right) \left( q\left( x\right) -\bar{q}\left( x\right)
\right) h\left( x\right) y\left( x\right) dx=\bar{f}\left( x\right) \left( 
\bar{h}^{\prime }\left( x\right) \bar{y}\left( x\right) -\bar{h}\left(
x\right) \bar{y}^{\prime }\left( x\right) \right)  \label{Eqn91}
\end{equation}%
and hence the transformed equations do not yield a new integral in this
manner, though new integrals can be obtained from the transformed equations
using fragmentary equations.

\subsection{A Riccati Equation}

Instead of transforming $p\left( x\right) $ in equation (\ref{Eqn0}), we can
instead target $q\left( x\right) $ for simplification by a dependent
variable change. The differential equation%
\begin{equation}
z^{\prime \prime }\left( x\right) +\bar{p}\left( x\right) z^{\prime }\left(
x\right) +\bar{q}\left( x\right) z\left( x\right) =0  \label{Eqn92}
\end{equation}%
is transformed by the variable change $y\left( x\right) =g\left( x\right)
z\left( x\right) $ to give%
\begin{equation}
y^{\prime \prime }\left( x\right) +p\left( x\right) y^{\prime }\left(
x\right) +q\left( x\right) y\left( x\right) =0  \label{Eqn93}
\end{equation}%
and then from equation (\ref{Eqn73}) the function $q\left( x\right) $ is
given by%
\begin{equation}
q\left( x\right) =\frac{1}{2}\left( p\left( x\right) -\bar{p}\left( x\right)
\right) ^{\prime }+\frac{p^{2}\left( x\right) -\bar{p}^{2}\left( x\right) }{4%
}+\bar{q}\left( x\right) \text{.}  \label{Eqn94}
\end{equation}%
If $\bar{p}\left( x\right) $ and $\bar{q}\left( x\right) $ are considered
known and $q\left( x\right) $ is to be specified, solving for $p\left(
x\right) $ as an unknown gives a Riccati equation [5]. Although nonlinear,
it is a well-known equation with many useful exact solutions [5]. This can
be illustrated with the associated Legendre equation, which has a relatively
cumbersome $q\left( x\right) ,$ in the context of applying equation (\ref%
{Eqn1}): 
\begin{equation}
z^{\prime \prime }\left( x\right) -\frac{2x}{1-x^{2}}z^{\prime }\left(
x\right) +\left( \frac{\nu \left( \nu +1\right) }{1-x^{2}}-\frac{\mu ^{2}}{%
\left( 1-x^{2}\right) ^{2}}\right) z\left( x\right) =0\text{.}  \label{Eqn95}
\end{equation}%
Under a general dependent variable change, the new $q\left( x\right) $ is
given by (\ref{Eqn94}) as%
\begin{equation}
\frac{1}{2}\frac{dp}{dx}+\frac{p^{2}}{4}+\frac{\nu \left( \nu +1\right) }{%
1-x^{2}}-\frac{\mu ^{2}-1}{\left( 1-x^{2}\right) ^{2}}  \label{Eqn96}
\end{equation}%
which has a simple solution%
\begin{equation}
p\left( x\right) =-\frac{2\left( x-\mu \right) }{1-x^{2}}  \label{Eqn97}
\end{equation}%
and the transformed associated Legendre equation is therefore%
\begin{equation}
y^{\prime \prime }\left( x\right) -\frac{2\left( x-\mu \right) }{1-x^{2}}%
y^{\prime }\left( x\right) +\frac{\nu \left( \nu +1\right) }{1-x^{2}}y\left(
x\right) =0\text{.}  \label{Eqn98}
\end{equation}%
For this differential equation $f\left( x\right) $ and $g\left( x\right) $
are given by equations (\ref{Eqn1a}) and (\ref{Eqn69}), respectively, as%
\begin{equation}
f\left( x\right) =\left( 1-x^{2}\right) \left( \frac{1+x}{1-x}\right) ^{\mu }
\label{Eqn99}
\end{equation}%
\begin{equation}
g\left( x\right) =\left( \frac{1-x}{1+x}\right) ^{\mu /2}  \label{Eqn100}
\end{equation}%
and hence the general solution of (\ref{Eqn98}) is%
\begin{equation}
y\left( x\right) =\left( \frac{1-x}{1+x}\right) ^{\mu /2}\left( C_{1}P_{\nu
}^{\mu }\left( x\right) +C_{2}Q_{\nu }^{\mu }\left( x\right) \right) \text{.}
\label{Eqn101}
\end{equation}%
Simple integrals can be obtained by taking $h\left( x\right) $ to be either
of the independent solutions of the equation%
\begin{equation}
h^{\prime \prime }\left( x\right) -\frac{2\left( x-\mu \right) }{1-x^{2}}%
h^{\prime }\left( x\right) =0  \label{Eqn102}
\end{equation}%
which are%
\begin{equation}
h\left( x\right) =1  \label{Eqn103}
\end{equation}%
\begin{equation}
h\left( x\right) =\left( \frac{1-x}{1+x}\right) ^{\mu }\text{.}
\label{Eqn104}
\end{equation}%
Substituting $h\left( x\right) =1$ into equation (\ref{Eqn1}) gives%
\begin{equation*}
\dint \left( \frac{1+x}{1-x}\right) ^{\mu /2}\QATOPD\{ \} {P_{\nu }^{\mu
}\left( x\right) }{Q_{\nu }^{\mu }\left( x\right) }dx=
\end{equation*}%
\begin{equation}
\frac{1}{\nu \left( \nu +1\right) }\left( \frac{1+x}{1-x}\right) ^{\mu
/2}\QATOPD\{ \} {\mu P_{\nu }^{\mu }\left( x\right) +\left( x^{2}-1\right)
P_{\nu }^{\prime \mu }\left( x\right) }{\mu Q_{\nu }^{\mu }\left( x\right)
+\left( x^{2}-1\right) Q_{\nu }^{\prime \mu }\left( x\right) }
\label{Eqn105}
\end{equation}%
which from the recurrence relation [1]:%
\begin{equation}
\left( 1-x^{2}\right) P_{\nu }^{\prime \mu }\left( x\right) =\nu xP_{\nu
}^{\mu }\left( x\right) -\left( \nu +\mu \right) P_{\nu -1}\left( x\right)
\label{Eqn106}
\end{equation}%
can be expressed as%
\begin{equation*}
\dint \left( \frac{1+x}{1-x}\right) ^{\mu /2}\QATOPD\{ \} {P_{\nu }^{\mu
}\left( x\right) }{Q_{\nu }^{\mu }\left( x\right) }dx=
\end{equation*}%
\begin{equation}
\frac{1}{\nu \left( \nu +1\right) }\left( \frac{1+x}{1-x}\right) ^{\mu
/2}\QATOPD\{ \} {\left( \nu x+\mu \right) P_{\nu }^{\mu }\left( x\right)
-\left( \nu +\mu \right) P_{\nu -1}^{\mu }\left( x\right) }{\left( \nu x+\mu
\right) Q_{\nu }^{\mu }\left( x\right) -\left( \nu +\mu \right) Q_{\nu
-1}^{\mu }\left( x\right) }\text{.}  \label{Eqn107}
\end{equation}%
Substituting equation (\ref{Eqn104}) into equation (\ref{Eqn1}) gives after
some reduction:%
\begin{equation*}
\dint \left( \frac{1-x}{1+x}\right) ^{\mu /2}\QATOPD\{ \} {P_{\nu }^{\mu
}\left( x\right) }{Q_{\nu }^{\mu }\left( x\right) }dx=
\end{equation*}%
\begin{equation}
\frac{1}{\nu \left( \nu +1\right) }\left( \frac{1-x}{1+x}\right) ^{\mu /2}%
\left[ \QATOPD\{ \} {\left( \nu x-\mu \right) P_{\nu }^{\mu }\left( x\right)
-\left( \nu +\mu \right) P_{\nu -1}\left( x\right) }{\left( \nu x-\mu
\right) Q_{\nu }^{\mu }\left( x\right) -\left( \nu +\mu \right) Q_{\nu
-1}\left( x\right) }\right] \text{.}  \label{Eqn109}
\end{equation}%
Equations (\ref{Eqn107}) and (\ref{Eqn109}) are previously known integrals
which are tabulated in a combined form in [4]. A different integral is
obtained by taking $h\left( x\right) $ to be the solution of 
\begin{equation}
-\frac{2\left( x-\mu \right) }{1-x^{2}}h^{\prime }\left( x\right) +\frac{\nu
\left( \nu +1\right) }{1-x^{2}}h\left( x\right) =0  \label{Eqn110}
\end{equation}%
which is: 
\begin{equation}
h\left( x\right) =\left( x-\mu \right) ^{\nu \left( \nu +1\right) /2}\text{.}
\label{Eqn111}
\end{equation}%
Substituting equation (\ref{Eqn111})\ into equation (\ref{Eqn1}) gives after
some reduction%
\begin{equation*}
\dint \left( x-\mu \right) ^{\nu \left( \nu +1\right) /2-2}\frac{\left(
1+x\right) ^{\mu /2+1}}{\left( 1-x\right) ^{\mu /2-1}}\QATOPD\{ \} {P_{\nu
}^{\mu }\left( x\right) }{Q_{\nu }^{\mu }\left( x\right) }dx=\left( x-\mu
\right) ^{\nu \left( \nu +1\right) /2}\left( \frac{1+x}{1-x}\right) ^{\mu
/2}\times
\end{equation*}%
\begin{equation*}
\left( \frac{2\left( 1-x^{2}\right) }{\left( \nu -1\right) \left( \nu
+2\right) \left( x-\mu \right) }\QATOPD\{ \} {P_{\nu }^{\mu }\left( x\right)
}{Q_{\nu }^{\mu }\left( x\right) }+\right.
\end{equation*}%
\begin{equation}
\left. \frac{4}{\left( \nu -1\right) \nu \left( \nu +1\right) \left( \nu
+2\right) }\QATOPD\{ \} {\left( \nu x+\mu \right) P_{\nu }^{\mu }\left(
x\right) -\left( \nu +\mu \right) P_{\nu -1}^{\mu }\left( x\right) }{\left(
\nu x+\mu \right) Q_{\nu }^{\mu }\left( x\right) -\left( \nu +\mu \right)
Q_{\nu -1}^{\mu }\left( x\right) }\right) \text{.}  \label{Eqn112}
\end{equation}%
This integral seems to be new.

\section{Examples for some special functions}

There are an unlimited number of cases as $h\left( x\right) $ is arbitrary.
The art of using equation (\ref{Eqn1}) is to choose $h\left( x\right) $ to
give an interesting integral.

\subsection{Bessel Functions}

Equation (\ref{Eqn1}) for Bessel's equation gives:%
\begin{equation*}
\dint x\left( h^{\prime \prime }\left( x\right) +\frac{1}{x}h^{\prime
}\left( x\right) +\left( 1-\frac{n^{2}}{x^{2}}\right) h\left( x\right)
\right) J_{n}\left( x\right) dx=
\end{equation*}%
\begin{equation}
x\left( h^{\prime }\left( x\right) J_{n}\left( x\right) -h\left( x\right)
J_{n}^{\prime }\left( x\right) \right)  \label{Eqn113}
\end{equation}%
and substituting $h\left( x\right) =x^{m}$ in this equation:%
\begin{equation}
\dint x^{m+1}\left( 1+\frac{m^{2}-n^{2}}{x^{2}}\right) J_{n}\left( x\right)
dx=x^{m+1}\left( \frac{mJ_{n}\left( x\right) }{x}-J_{n}^{\prime }\left(
x\right) \right)  \label{Eqn114}
\end{equation}%
which from the Bessel recurrence%
\begin{equation}
J_{n}^{\prime }\left( x\right) =\frac{1}{2}\left( J_{n-1}\left( x\right)
-J_{n+1}\left( x\right) \right)  \label{Eqn115}
\end{equation}%
can be expressed as:%
\begin{equation*}
\dint x^{m+1}\left( 1+\frac{m^{2}-n^{2}}{x^{2}}\right) J_{n}\left( x\right)
dx=
\end{equation*}%
\begin{equation}
x^{m+1}\left( \frac{mJ_{n}\left( x\right) }{x}+\frac{1}{2}\left(
J_{n+1}\left( x\right) -J_{n-1}\left( x\right) \right) \right) \text{.}
\label{Eqn116}
\end{equation}%
The second term in equation (\ref{Eqn116}) vanishes for $m=\pm n$, and the
Bessel recurrence%
\begin{equation}
\frac{mJ_{n}\left( x\right) }{x}=\frac{1}{2}\left( J_{n-1}\left( x\right)
+J_{n+1}\left( x\right) \right)  \label{Eqn117}
\end{equation}%
then gives the elementary cases%
\begin{equation}
\dint x^{n+1}J_{n}\left( x\right) dx=x^{n+1}J_{n+1}\left( x\right) dx
\label{Eqn118}
\end{equation}%
\begin{equation}
\dint x^{-n+1}J_{n}\left( x\right) dx=-x^{-n+1}J_{n-1}\left( x\right) dx%
\text{.}  \label{Eqn119}
\end{equation}%
The general integral given by equation (\ref{Eqn116}) appears to be new. For 
$n=0$ and $m=1$ this equation gives the simple expression%
\begin{equation}
\dint \left( 1+x^{2}\right) J_{0}\left( x\right) dx=xJ_{0}\left( x\right)
+x^{2}J_{1}\left( x\right)  \label{Eqn120}
\end{equation}%
which can be readily verified by differentiation.

Substituting $h\left( x\right) =x^{m}\QATOPD\{ \} {\sin x}{\cos x}$ in
equation (\ref{Eqn113}) gives the integral%
\begin{equation*}
\dint \left( \left( m^{2}-n^{2}\right) x^{m-1}\QATOPD\{ \} {\sin x}{\cos
x}+\left( 2m+1\right) x^{m}\QATOPD\{ \} {\cos x}{-\sin x}\right) J_{n}\left(
x\right) dx
\end{equation*}%
\begin{equation}
=x^{m+1}\left[ \left( \frac{mJ_{n}\left( x\right) }{x}-J_{n}^{\prime }\left(
x\right) \right) \QATOPD\{ \} {\sin x}{\cos x}+J_{n}\left( x\right)
\QATOPD\{ \} {\cos x}{-\sin x}\right] \text{.}  \label{Eqn121}
\end{equation}%
The integrand of (\ref{Eqn121}) reduces to a single term for $m=n$ or $%
m=-1/2 $. Employing the Bessel recurrence:%
\begin{equation}
J_{n}^{\prime }\left( x\right) =\frac{nJ_{n}\left( x\right) }{x}%
-J_{n+1}\left( x\right)  \label{Eqn122}
\end{equation}%
gives for $m=n$:%
\begin{equation}
\dint x^{n}\QATOPD\{ \} {\sin x}{\cos x}J_{n}\left( x\right) dx=\frac{x^{n+1}%
}{2n+1}\QATOPD\{ \} {J_{n}\left( x\right) \sin x-J_{n+1}\left( x\right) \cos
x}{J_{n}\left( x\right) \cos x+J_{n+1}\left( x\right) \sin x}  \label{Eqn123}
\end{equation}%
and for $m=-1/2:$%
\begin{equation*}
\dint x^{-3/2}J_{n}\left( x\right) \QATOPD\{ \} {\sin x}{\cos x}dx=
\end{equation*}%
\begin{equation}
=\left( \frac{J_{n}\left( x\right) }{\left( n-1/2\right) \sqrt{x}}-\frac{%
\sqrt{x}J_{n+1}\left( x\right) }{n^{2}-1/4}\right) \QATOPD\{ \} {\sin
x}{\cos x}-\frac{\sqrt{x}J_{n}\left( x\right) }{n^{2}-1/4}\QATOPD\{ \} {\cos
x}{-\sin x}\text{.}  \label{Eqn124}
\end{equation}%
Equation (\ref{Eqn123}) $\allowbreak $is tabulated in [3]. Equation (\ref%
{Eqn124}) is also tabulated in [3] but unfortunately with multiple
typographical errors. The same errors are also present in the original
Russian edition [11], but it is evident that the authors of [3,11] did
originally have equation (\ref{Eqn124}).

Substituting $h\left( x\right) =x^{n}\ln \left( x\right) $ in equation (\ref%
{Eqn113}) gives%
\begin{equation}
\dint \left( 2nx^{n-1}+x^{n+1}\ln \left( x\right) \right) J_{n}\left(
x\right) dx=x^{n}\left( 1+n\ln x\right) J_{n}\left( x\right) -x^{n+1}\ln
\left( x\right) J_{n}^{\prime }\left( x\right)  \label{Eqn125}
\end{equation}%
which can be simplified with equation (\ref{Eqn122}) to give%
\begin{equation}
\dint \left( 2nx^{n-1}+x^{n+1}\ln \left( x\right) \right) J_{n}\left(
x\right) dx=x^{n}J_{n}\left( x\right) +\ln \left( x\right)
x^{n+1}J_{n+1}\left( x\right) \text{.}  \label{Eqn126}
\end{equation}%
For $n=0$, equation (\ref{Eqn126}) reduces to%
\begin{equation}
\dint x\ln \left( x\right) J_{0}\left( x\right) dx=J_{0}\left( x\right)
+x\ln \left( x\right) J_{1}\left( x\right) \text{.}  \label{Eqn127}
\end{equation}%
$\allowbreak $Despite its simplicity, equation (\ref{Eqn127}) appears to be
new, and the Mathematica software [12] is unable to evaluate this integral.

Substituting the power function $h\left( x\right) =x^{n}$ into equation (\ref%
{Eqn1}) with $y\left( x\right) $ given by the Airy equation (\ref{Eqn51a})
gives the integral:%
\begin{equation}
\dint \left( n\left( n-1\right) x^{n-2}-x^{n+1}\right) \QATOPD\{ \} {\func{Ai%
}\left( x\right) }{\func{Bi}\left( x\right) }dx=nx^{n-1}\QATOPD\{ \} {\func{%
Ai}\left( x\right) }{\func{Bi}\left( x\right) }-x^{n}\QATOPD\{ \} {\func{Ai}%
^{\prime }\left( x\right) }{\func{Bi}^{\prime }\left( x\right) }\text{.}
\label{Eqn128}
\end{equation}%
Equation (\ref{Eqn128}) reduces to a single term for $n=0$ and $n=1$, to
give the well-known integrals:%
\begin{equation}
\dint x\QATOPD\{ \} {\func{Ai}\left( x\right) }{\func{Bi}\left( x\right)
}dx=\QATOPD\{ \} {\func{Ai}^{\prime }\left( x\right) }{\func{Bi}^{\prime
}\left( x\right) }  \label{Eqn128a}
\end{equation}%
\begin{equation}
\dint x^{2}\QATOPD\{ \} {\func{Ai}\left( x\right) }{\func{Bi}\left( x\right)
}dx=x\QATOPD\{ \} {\func{Ai}^{\prime }\left( x\right) }{\func{Bi}^{\prime
}\left( x\right) }-\QATOPD\{ \} {\func{Ai}\left( x\right) }{\func{Bi}\left(
x\right) }\text{.}  \label{Eqn128b}
\end{equation}%
Defining%
\begin{equation}
I_{n}\left( x\right) =\dint x^{n}\QATOPD\{ \} {\func{Ai}\left( x\right) }{%
\func{Bi}\left( x\right) }dx  \label{Eqn128c}
\end{equation}%
then equation (\ref{Eqn128}) can be expressed as a recursion relation for
the $I_{n}\left( x\right) $:%
\begin{equation}
I_{n+3}\left( x\right) =\left( n+1\right) \left( n+2\right) I_{n}\left(
x\right) -\left( n+2\right) x^{n+1}\QATOPD\{ \} {\func{Ai}\left( x\right) }{%
\func{Bi}\left( x\right) }+x^{n+2}\QATOPD\{ \} {\func{Ai}^{\prime }\left(
x\right) }{\func{Bi}^{\prime }\left( x\right) }\text{.}  \label{Eqn128d}
\end{equation}%
Using equations (\ref{Eqn51c})-(\ref{Eqn51d}) and (\ref{Eqn128a})-(\ref%
{Eqn128b}) as starting formulas, equation (\ref{Eqn128d}) allows closed form
solutions to be obtained by upward recursion for all $n\in 
\mathbb{N}
_{0}$. Downward recursion to negative integer values of $n$ is blocked when (%
\ref{Eqn128a}) and (\ref{Eqn128b}) are the starting formulas, but it can be
done with either (\ref{Eqn51c}) or (\ref{Eqn51d}) as a starting formula.

The Airy functions satisfy the differential equation (\ref{Eqn51a}) and from
this equation it follows that the slightly more general differential equation%
\begin{equation}
y^{\prime \prime }\left( x\right) -\left( x-\alpha \right) y\left( x\right)
=0  \label{Eqn129}
\end{equation}%
has the general solution%
\begin{equation}
y\left( x\right) =C_{1}\func{Ai}\left( x-\alpha \right) +C_{2}\func{Bi}%
\left( x-\alpha \right) \allowbreak \text{.}  \label{Eqn130}
\end{equation}%
Substituting $h\left( x\right) =\sin \left( x+\phi \right) $ into equation (%
\ref{Eqn1}) with $y\left( x\right) $ a solution of equation (\ref{Eqn129})
for $\alpha =1$ gives%
\begin{equation*}
\dint x\sin \left( x+\phi \right) \QATOPD\{ \} {\func{Ai}\left( x-1\right) }{%
\func{Bi}\left( x-1\right) }dx=
\end{equation*}%
\begin{equation}
\sin \left( x+\phi \right) \QATOPD\{ \} {\func{Ai}^{\prime }\left(
x-1\right) }{\func{Bi}^{\prime }\left( x-1\right) }-\cos \left( x+\phi
\right) \QATOPD\{ \} {\func{Ai}\left( x-1\right) }{\func{Bi}\left(
x-1\right) }\text{.}  \label{Eqn132}
\end{equation}%
Similarly, substituting $h\left( x\right) =e^{\pm x}$ into equation (\ref%
{Eqn1}) for $\alpha =-1$ gives%
\begin{equation}
\dint xe^{\pm x}\QATOPD\{ \} {\func{Ai}\left( x+1\right) }{\func{Bi}\left(
x+1\right) }dx=e^{\pm x}\left( \QATOPD\{ \} {\func{Ai}^{\prime }\left(
x+1\right) }{\func{Bi}^{\prime }\left( x+1\right) }\mp \QATOPD\{ \} {\func{Ai%
}\left( x+1\right) }{\func{Bi}\left( x+1\right) }\right) \text{.}
\label{Eqn133}
\end{equation}%
Substituting $h\left( x\right) =\func{Ai}\left( x-\beta \right) $ and $%
h\left( x\right) =\func{Ai}\left( x-\beta \right) $ into equation (\ref{Eqn1}%
) gives, respectively,%
\begin{equation*}
\dint \QATOPD\{ \} {\func{Ai}\left( x-\beta \right) \func{Ai}\left( x-\alpha
\right) }{\func{Ai}\left( x-\beta \right) \func{Bi}\left( x-\alpha \right)
}dx=
\end{equation*}%
\begin{equation}
\frac{1}{\alpha -\beta }\QATOPD\{ \} {\func{Ai}^{\prime }\left( x-\beta
\right) \func{Ai}\left( x-\alpha \right) -\func{Ai}\left( x-\beta \right) 
\func{Ai}^{\prime }\left( x-\alpha \right) }{\func{Ai}^{\prime }\left(
x-\beta \right) \func{Bi}\left( x-\alpha \right) -\func{Ai}\left( x-\beta
\right) \func{Bi}^{\prime }\left( x-\alpha \right) }  \label{Eqn134}
\end{equation}%
\begin{equation*}
\dint \QATOPD\{ \} {\func{Bi}\left( x-\beta \right) \func{Ai}\left( x-\alpha
\right) }{\func{Bi}\left( x-\beta \right) \func{Bi}\left( x-\alpha \right)
}dx=
\end{equation*}%
\begin{equation}
\frac{1}{\alpha -\beta }\QATOPD\{ \} {\func{Bi}^{\prime }\left( x-\beta
\right) \func{Ai}\left( x-\alpha \right) -\func{Ai}\left( x-\beta \right) 
\func{Ai}^{\prime }\left( x-\alpha \right) }{\func{Bi}^{\prime }\left(
x-\beta \right) \func{Bi}\left( x-\alpha \right) -\func{Bi}\left( x-\beta
\right) \func{Bi}^{\prime }\left( x-\alpha \right) }\text{.}  \label{Eqn135}
\end{equation}%
The integrals given by equations (\ref{Eqn132})-(\ref{Eqn135}) appear to be
new.

\subsection{Gauss hypergeometric functions}

The Gauss hypergeometric function $y\left( x\right) ={}_{2}F_{1}\left(
\alpha ,\beta ;\gamma ;x\right) $ satisfies the differential equation [1]:

\begin{equation}
y^{\prime \prime }\left( x\right) +\frac{\gamma -\left( \alpha +\beta
+1\right) x}{x\left( 1-x\right) }y^{\prime }\left( x\right) -\frac{\alpha
\beta }{x\left( 1-x\right) }y\left( x\right) =0  \label{Eqn136}
\end{equation}%
and for this equation $f\left( x\right) $ is given by equation (\ref{Eqn1a})
as 
\begin{equation}
f\left( x\right) =x^{\gamma }\left( 1-x\right) ^{\alpha +\beta -\gamma +1}%
\text{.}  \label{Eqn137}
\end{equation}

Substituting $h\left( x\right) =1$ into equation (\ref{Eqn1}) gives%
\begin{equation*}
\dint x^{\gamma -1}\left( 1-x\right) ^{\alpha +\beta -\gamma
}{}_{2}F_{1}\left( \alpha ,\beta ;\gamma ;x\right) =
\end{equation*}%
\begin{equation}
\frac{1}{\gamma }x^{\gamma }\left( 1-x\right) ^{\alpha +\beta -\gamma
+1}{}_{2}F_{1}\left( \alpha +1,\beta +1;\gamma +1;x\right)  \label{Eqn138}
\end{equation}%
which is a tabulated integral [4].

We can take $h\left( x\right) $ to be a solution of%
\begin{equation}
\frac{\gamma -\left( \alpha +\beta +1\right) x}{x\left( 1-x\right) }%
y^{\prime }\left( x\right) -\frac{\alpha \beta }{x\left( 1-x\right) }y\left(
x\right) =0  \label{Eqn139}
\end{equation}%
which gives%
\begin{equation}
h\left( x\right) =\left( \gamma -\left( \alpha +\beta +1\right) x\right) ^{-%
\frac{\alpha \beta }{\alpha +\beta +1}}  \label{Eqn140}
\end{equation}%
and substituting this $h\left( x\right) $ into equation (\ref{Eqn1}) gives 
\begin{equation*}
\dint \left( \gamma -\left( \alpha +\beta +1\right) x\right) ^{-\frac{\alpha
\beta }{\alpha +\beta +1}-2}{}_{2}F_{1}\left( \alpha ,\beta ;\gamma
;x\right) dx=
\end{equation*}%
\begin{equation*}
\frac{x^{\gamma }\left( 1-x\right) ^{\alpha +\beta +1-\gamma }}{\left(
\alpha \beta +\alpha +\beta +1\right) }\left( \gamma -\left( \alpha +\beta
+1\right) x\right) ^{-\frac{\alpha \beta }{\alpha +\beta +1}{}}\times
\end{equation*}%
\begin{equation}
\left( {}\frac{_{2}F_{1}\left( \alpha ,\beta ;\gamma ;x\right) }{\gamma
-\left( \alpha +\beta +1\right) x}-{}\frac{_{2}F_{1}\left( \alpha +1,\beta
+1;\gamma +1;x\right) }{\gamma }\right)  \label{Eqn141}
\end{equation}

\subsubsection{Conjugate hypergeometric functions}

In equation (\ref{Eqn1}) we can choose $y\left( x\right) $ to be the
hypergeometric function%
\begin{equation}
y\left( x\right) ={}_{2}F_{1}\left( \alpha ,\beta ;\gamma ;x\right)
\label{Eqn142}
\end{equation}%
and $h\left( x\right) $ to be a conjugate hypergeometic function: 
\begin{equation}
h\left( x\right) ={}_{2}F_{1}\left( \alpha +\delta ,\beta -\delta ;\gamma
;x\right)  \label{Eqn143}
\end{equation}%
where $h\left( x\right) $ satisfies the equation%
\begin{equation}
h^{\prime \prime }\left( x\right) +\frac{\gamma -\left( \alpha +\beta
+1\right) x}{x\left( 1-x\right) }h^{\prime }\left( x\right) -\left( \frac{%
\alpha \beta }{x\left( 1-x\right) }-\frac{\delta \left( \alpha -\beta
+\delta \right) }{x\left( 1-x\right) }\right) h\left( x\right) =0\text{.}
\label{Eqn144}
\end{equation}%
This gives the integral 
\begin{equation*}
\gamma \delta \left( \alpha -\beta +\delta \right) \dint x^{\gamma -1}\left(
1-x\right) ^{\alpha +\beta -\gamma }{}_{2}F_{1}\left( \alpha +\delta ,\beta
-\delta ;\gamma ;x\right) {}_{2}F_{1}\left( \alpha ,\beta ;\gamma ;x\right)
dx=
\end{equation*}%
\begin{equation*}
x^{\gamma }\left( 1-x\right) ^{\alpha +\beta -\gamma +1}\left( \alpha \beta
_{2}F_{1}\left( \alpha +\delta ,\beta -\delta ;\gamma ;x\right)
{}_{2}F_{1}\left( \alpha +1,\beta +1;\gamma +1;x\right) \right.
\end{equation*}%
\begin{equation}
\left. -\left( \alpha +\delta \right) \left( \beta -\delta \right)
{}_{2}F_{1}\left( \alpha +\delta +1,\beta -\delta +1;\gamma +1;x\right)
{}_{2}F_{1}\left( \alpha ,\beta ;\gamma ;x\right) \right) \text{.}
\label{Eqn145}
\end{equation}

\section{Complete elliptic integrals of the first and second kinds}

In this section the standard notation will be adopted for elliptic
integrals, with the modulus denoted by $k$ and the complementary modulus by $%
k^{\prime }\equiv \sqrt{1-k^{2}}$. The complete elliptic integral of the
first kind $\mathbf{K}\left( k\right) $ is a solution of the differential
equation [1]:%
\begin{equation}
y^{\prime \prime }\left( k\right) +\left( \frac{1}{k}-\frac{2k}{1-k^{2}}%
\right) y^{\prime }\left( k\right) -\frac{1}{1-k^{2}}y\left( k\right) =0
\label{Eqn146}
\end{equation}%
which has $f\left( k\right) =k\left( 1-k^{2}\right) $ and has the general
solution%
\begin{equation}
y\left( k\right) =C_{1}\mathbf{K}\left( k\right) +C_{2}\mathbf{K}^{\prime
}\left( k\right)  \label{Eqn147}
\end{equation}%
where $\mathbf{K}^{\prime }\left( k\right) \equiv \mathbf{K}\left( k^{\prime
}\right) $. The corresponding equation for $\mathbf{E}\left( k\right) $, the
complete elliptic integral of the second kind, is [1]:%
\begin{equation}
z^{\prime \prime }\left( k\right) +\frac{1}{k}z^{\prime }\left( k\right) +%
\frac{1}{1-k^{2}}z\left( k\right) =0  \label{Eqn148}
\end{equation}%
which has $f\left( k\right) =k$ and the general solution%
\begin{equation}
z\left( k\right) =C_{1}\mathbf{E}\left( k\right) +C_{2}\left( \mathbf{E}%
^{\prime }\left( k\right) -\mathbf{K}^{\prime }\left( k\right) \right) \text{%
.}  \label{Eqn149}
\end{equation}%
Here, only the solutions $\mathbf{K}\left( k\right) $ and $\mathbf{E}\left(
k\right) $ of these equations will be considered, though the other solutions
also lead to interesting integrals.

\subsection{Transformations of the Equations}

These equations can be transformed to give many different forms for $p\left(
k\right) $, and only a few will be considered here. The dependent variable
name $y\left( k\right) $ will be retained for transformations of equation (%
\ref{Eqn146}) and the variable name $z\left( k\right) $ will be used for
transformations of equation (\ref{Eqn148}). Equation (\ref{Eqn146}) can be
transformed to be conjugate with both the Bessel equation (\ref{Eqn54}) and
equation (\ref{Eqn148}), which gives:%
\begin{equation}
y^{\prime \prime }\left( k\right) +\frac{1}{x}y^{\prime }\left( k\right) +%
\frac{1}{\left( 1-k^{2}\right) ^{2}}y\left( k\right) =0  \label{Eqn150}
\end{equation}%
and this equation has the general solution%
\begin{equation}
y\left( k\right) =C_{1}k^{\prime }\mathbf{K}\left( k\right) +\allowbreak
C_{2}k^{\prime }\mathbf{K}^{\prime }\left( k\right) \text{.}  \label{Eqn151}
\end{equation}%
Transforming equation (\ref{Eqn146}) to be conjugate with the associated
Legendre equation (\ref{Eqn95}) gives:%
\begin{equation}
y^{\prime \prime }\left( k\right) -\frac{2k}{1-k^{2}}y^{\prime }\left(
k\right) +\allowbreak \frac{1}{4k^{2}}y\left( k\right) =0\text{.}
\label{Eqn152}
\end{equation}%
This equation has $f\left( k\right) =\left( 1-k^{2}\right) $ and the general
solution:%
\begin{equation}
y\left( k\right) =C_{1}\sqrt{k}\mathbf{K}\left( k\right) +\allowbreak C_{2}%
\sqrt{k}\mathbf{K}^{\prime }\left( k\right) \text{.}  \label{Eqn153}
\end{equation}%
Equation (\ref{Eqn148}) can be transformed to be conjugate with equation (%
\ref{Eqn146}), which gives 
\begin{equation}
z^{\prime \prime }\left( k\right) +\left( \frac{1}{k}-\frac{2k}{1-k^{2}}%
\right) z^{\prime }\left( k\right) -\frac{1}{\left( 1-k^{2}\right) ^{2}}%
z\left( k\right) =0  \label{Eqn154}
\end{equation}%
and this equation has the general solution%
\begin{equation}
z\left( k\right) =C_{1}\frac{\mathbf{E}\left( k\right) }{k^{\prime }}+C_{2}%
\frac{\mathbf{E}^{\prime }\left( k\right) -\mathbf{K}^{\prime }\left(
k\right) }{k^{\prime }}\text{.}  \label{Eqn155}
\end{equation}%
Equation (\ref{Eqn148}) can also be transformed to be conjugate with
equation (\ref{Eqn152}) and the associated Legendre equation (\ref{Eqn95}).
The transformed equation can be expressed in either of the forms:%
\begin{equation}
z^{\prime \prime }\left( k\right) -\frac{2k}{1-k^{2}}z^{\prime }\left(
k\right) +\left( \frac{1}{4k^{2}}+\frac{1}{1-k^{2}}-\frac{1}{\left(
1-k^{2}\right) ^{2}}\right) z\left( k\right) =0  \label{Eqn156}
\end{equation}%
\begin{equation}
z^{\prime \prime }\left( k\right) -\frac{2k}{1-k^{2}}z^{\prime }\left(
k\right) +\left( \frac{1}{4k^{2}}-\frac{k^{2}}{\left( 1-k^{2}\right) ^{2}}%
\right) z\left( k\right) =0  \label{Eqn156a}
\end{equation}%
and has the general solution%
\begin{equation}
z\left( k\right) =C_{1}\frac{\sqrt{k}\mathbf{E}\left( k\right) }{k^{\prime }}%
+\frac{\sqrt{k}\left( \mathbf{E}^{\prime }\left( k\right) -\mathbf{K}%
^{\prime }\left( k\right) \right) }{k^{\prime }}\text{.}  \label{Eqn157}
\end{equation}%
An unlimited number of similar transformations are possible, each yielding
distinct integrals, but only equations (\ref{Eqn146}) and (\ref{Eqn148})
will be considered in detail here. The technique used below is to specify $%
h\left( x\right) $ as a solution to some fragment of the differential
equation under consideration. For each equation, this gives a long list of
integrals, most new and some very surprising. Since an unlimited number of
transformations of the equations are possible, an extremely large number of
interesting integrals can be generated. The lists of integrals given below
for equations (\ref{Eqn146}) and (\ref{Eqn148}) are not complete, as some
fragmentary solutions were rejected as not particularly interesting.
However, the reader will be able to generate many additional integrals.

\subsection{Integrals from fragments of the equation for the complete
elliptic integral of the first kind}

In equation (\ref{Eqn1}) with $y\left( k\right) $ a solution of equation (%
\ref{Eqn146}) we can take $h\left( k\right) =1$ and employ the recurrence
relation:%
\begin{equation}
\frac{d\mathbf{K}\left( k\right) }{dk}=\frac{\mathbf{E}\left( k\right) }{%
kk^{\prime 2}}-\frac{\mathbf{K}\left( k\right) }{k}  \label{Eqn158}
\end{equation}%
to obtain the integral%
\begin{equation}
\dint k\mathbf{K}\left( k\right) dk=\mathbf{E}\left( k\right) -k^{\prime 2}%
\mathbf{K}\left( k\right) \text{.}  \label{Eqn159}
\end{equation}%
The six fragmentary equations:%
\begin{equation}
h^{\prime \prime }\left( k\right) +\left( \frac{1}{k}-\frac{2k}{1-k^{2}}%
\right) h^{\prime }\left( k\right) =0  \label{Eqn160}
\end{equation}%
\begin{equation}
h^{\prime \prime }\left( k\right) +\frac{1}{k}h^{\prime }\left( k\right) =0
\label{Eqn161}
\end{equation}%
\begin{equation}
h^{\prime \prime }\left( k\right) -\frac{2k}{1-k^{2}}h^{\prime }\left(
k\right) =0  \label{Eqn162}
\end{equation}%
$\allowbreak $%
\begin{equation}
\left( \frac{1}{k}-\frac{2k}{1-k^{2}}\right) h^{\prime }\left( x\right) -%
\frac{1}{1-k^{2}}h\left( x\right) =0  \label{Eqn163}
\end{equation}%
\begin{equation}
\frac{1}{k}h^{\prime }\left( x\right) -\frac{1}{1-k^{2}}h\left( x\right) =0
\label{Eqn164}
\end{equation}%
\begin{equation}
\frac{-2k}{1-k^{2}}h^{\prime }\left( x\right) -\frac{1}{1-k^{2}}h\left(
x\right) =0  \label{Eqn165}
\end{equation}%
have the (non constant) respective solutions:%
\begin{equation}
h\left( k\right) =\ln \left( k/k^{\prime }\right)  \label{Eqn166}
\end{equation}%
\begin{equation}
h\left( k\right) =\ln \left( k\right)  \label{Eqn167}
\end{equation}%
\begin{equation}
h\left( k\right) =\func{arctanh}\left( k\right)  \label{Eqn168}
\end{equation}%
\begin{equation}
h\left( k\right) =\frac{1}{\left( 3k^{2}-1\right) ^{1/6}}  \label{Eqn170}
\end{equation}%
\begin{equation}
h\left( k\right) =\frac{1}{k^{\prime }}  \label{Eqn171}
\end{equation}%
\begin{equation}
h\left( k\right) =\frac{1}{\sqrt{k}}  \label{Eqn172}
\end{equation}%
and these give the six respective integrals:%
\begin{equation}
\dint k\ln \left( \frac{k}{k^{\prime }}\right) \mathbf{K}\left( k\right)
dk=\ln \left( \frac{k}{k^{\prime }}\right) \left( \mathbf{E}\left( k\right)
-k^{\prime 2}\mathbf{K}\left( k\right) \right) -\mathbf{K}\left( k\right)
\label{Eqn173}
\end{equation}%
\begin{equation}
\dint k\left( 2+\ln \left( k\right) \right) \mathbf{K}\left( k\right) dk=\ln
\left( k\right) \mathbf{E}\left( k\right) -k^{\prime 2}\left( 1+\ln \left(
k\right) \right) \mathbf{K}\left( k\right)  \label{Eqn174}
\end{equation}%
\begin{equation}
\dint \left( 1-k\func{arctanh}\left( k\right) \right) \mathbf{K}\left(
k\right) dk=k\mathbf{K}\left( k\right) -\func{arctanh}\left( k\right) \left( 
\mathbf{E}\left( k\right) -k^{\prime ^{2}}\mathbf{K}\left( k\right) \right)
\label{Eqn175}
\end{equation}%
\begin{equation}
\dint \frac{k\left( 1-k^{2}\right) \left( 1+4k^{2}\right) }{\left(
3k^{2}-1\right) ^{13/6}}\mathbf{K}\left( k\right) dx=\frac{\left(
2k^{2}-1\right) \left( 1-k^{2}\right) }{\left( 3k^{2}-1\right) ^{7/6}}%
\mathbf{K}\left( k\right) -\frac{\mathbf{E}\left( k\right) }{\left(
3k^{2}-1\right) ^{1/6}}  \label{Eqn176}
\end{equation}%
\begin{equation}
\dint \frac{k\mathbf{K}\left( k\right) }{k^{\prime 3}}dk=\frac{\mathbf{K}%
\left( k\right) -\mathbf{E}\left( k\right) }{k^{\prime }}  \label{Eqn177}
\end{equation}%
\begin{equation}
\dint \frac{k^{\prime 2}}{k^{3/2}}\mathbf{K}\left( x\right) dx=\frac{1}{%
\sqrt{k}}\left( 2k^{\prime 2}\mathbf{K}\left( k\right) -4\mathbf{E}\left(
k\right) \right) \text{.}  \label{Eqn178}
\end{equation}%
Equations (\ref{Eqn159}) and (\ref{Eqn177}) are given in [4] $\allowbreak
\allowbreak \allowbreak $but equations (\ref{Eqn173})-(\ref{Eqn176}) and (%
\ref{Eqn178}) appear to be new. Additional fragmentary equations can be
constructed using the identity:%
\begin{equation}
\frac{1}{1-k^{2}}\equiv 1+\frac{k^{2}}{1-k^{2}}  \label{Eqn179}
\end{equation}%
which gives equation (\ref{Eqn146}) in the alternative form:%
\begin{equation}
y^{\prime \prime }\left( k\right) +\left( \frac{1}{k}-\frac{2k}{1-k^{2}}%
\right) y^{\prime }\left( k\right) -\left( 1+\frac{k^{2}}{1-k^{2}}\right)
y^{\prime }\left( k\right) =0\text{.}  \label{Eqn180}
\end{equation}%
Useful fragmentary equations from equation (\ref{Eqn180}) are:%
\begin{equation}
h^{\prime \prime }\left( k\right) +\frac{1}{k}h^{\prime }\left( k\right) -h=0
\label{Eqn181}
\end{equation}%
\begin{equation}
h^{\prime \prime }\left( k\right) -h=0  \label{Eqn182}
\end{equation}%
\begin{equation}
\left( \frac{1}{k}-\frac{2k}{1-k^{2}}\right) h^{\prime }\left( k\right) -h=0
\label{Eqn183}
\end{equation}%
\begin{equation}
-\frac{2k}{1-k^{2}}h^{\prime }\left( k\right) -h=0  \label{Eqn184}
\end{equation}%
\begin{equation}
\frac{1}{k}h^{\prime }\left( k\right) -h=0  \label{Eqn185}
\end{equation}%
\begin{equation}
\left( \frac{1}{k}-\frac{2k}{1-k^{2}}\right) h^{\prime }\left( k\right) -%
\frac{k^{2}}{1-k^{2}}h=0  \label{Eqn186}
\end{equation}%
\begin{equation}
-\frac{2k}{1-k^{2}}h^{\prime }\left( k\right) -\frac{k^{2}}{1-k^{2}}h=0
\label{Eqn187}
\end{equation}%
\begin{equation}
\frac{1}{k}h^{\prime }\left( k\right) -\frac{k^{2}}{1-k^{2}}h=0\text{.}
\label{Eqn188}
\end{equation}%
These equations have the respective solutions%
\begin{equation}
h\left( k\right) =C_{1}K_{0}\left( k\right) +C_{2}I_{0}\left( k\right)
\allowbreak \allowbreak  \label{Eqn189}
\end{equation}%
\begin{equation}
h\left( k\right) =e^{\pm k}  \label{Eqn190}
\end{equation}%
\begin{equation}
h\left( k\right) =\frac{e^{\frac{1}{6}k^{2}}}{\left( 3k^{2}-1\right) ^{\frac{%
1}{9}}}  \label{Eqn191}
\end{equation}%
\begin{equation}
h\left( k\right) =\frac{C_{1}}{\sqrt{k}}e^{\frac{1}{4}k^{2}}  \label{Eqn192}
\end{equation}%
\begin{equation}
h\left( k\right) =C_{1}e^{\frac{1}{2}k^{2}}  \label{Eqn193}
\end{equation}%
\begin{equation}
h\left( k\right) =\frac{e^{-\frac{1}{6}k^{2}}}{\left( 3k^{2}-1\right) ^{%
\frac{1}{18}}}  \label{Eqn194}
\end{equation}%
\begin{equation}
h\left( k\right) =e^{-\frac{1}{4}k^{2}}  \label{Eqn195}
\end{equation}%
\begin{equation}
\frac{1}{k^{\prime }}e^{-\frac{1}{2}k^{2}}  \label{Eqn196}
\end{equation}%
which in turn yield the respective integrals: $\allowbreak \allowbreak
\allowbreak \allowbreak $%
\begin{equation*}
\dint k^{2}\QATOPD\{ \} {2K_{1}\left( k\right) -kK_{0}\left( k\right)
}{-2I_{1}\left( k\right) -kI_{0}\left( k\right) }\mathbf{K}\left( k\right)
dk=
\end{equation*}%
\begin{equation}
\QATOPD\{ \} {k^{\prime 2}\left( K_{0}\left( k\right) -kK_{1}\left( k\right)
\right) }{k^{\prime 2}\left( I_{0}\left( k\right) +kI_{1}\left( k\right)
\right) }\mathbf{K}\left( k\right) -\QATOPD\{ \} {K_{0}\left( k\right)
}{I_{0}\left( k\right) }\mathbf{E}\left( k\right)  \label{Eqn197}
\end{equation}%
\begin{equation}
\dint \left( 1-3k^{2}\mp k^{3}\right) e^{\pm k}\mathbf{K}\left( k\right)
dk=\left( \left( k\pm 1\right) k^{\prime 2}\mathbf{K}\left( k\right) \mp 
\mathbf{E}\left( k\right) \right) e^{\pm k}  \label{Eqn198}
\end{equation}%
\begin{equation*}
\dint \frac{k\left( 1-k^{2}+6k^{4}-9k^{6}-k^{8}\right) }{\left(
3k^{2}-1\right) ^{\frac{19}{9}}}e^{\frac{1}{6}k^{2}}\mathbf{K}\left(
k\right) dk=
\end{equation*}%
\begin{equation}
\left( \frac{\left( 1-k^{2}\right) \left( k^{4}+2k^{2}-1\right) }{3k^{2}-1}%
\mathbf{K}\left( k\right) -\mathbf{E}\left( k\right) \right) \frac{e^{\frac{1%
}{6}k^{2}}}{\left( 3k^{2}-1\right) ^{\frac{1}{9}}}  \label{Eqn199}
\end{equation}%
\begin{equation}
\dint \left( 1+k^{2}-5k^{4}-k^{6}\right) \frac{e^{\frac{1}{4}k^{2}}}{k^{%
\frac{3}{2}}}\mathbf{K}\left( k\right) dk=\left( 2\left( 1-k^{4}\right) 
\mathbf{K}\left( k\right) -4\mathbf{E}\left( k\right) \right) \frac{e^{\frac{%
1}{4}k^{2}}}{k^{1/2}}  \label{Eqn200}
\end{equation}%
\begin{equation}
\dint k\left( k^{4}+3k^{2}-1\right) e^{\frac{1}{2}k^{2}}\mathbf{K}\left(
k\right) dk=e^{\frac{1}{2}k^{2}}\left( \mathbf{E}\left( k\right) -\left(
1-k^{4}\right) \mathbf{K}\left( k\right) \right)  \label{Eqn201}
\end{equation}%
\begin{equation*}
\dint \frac{k\left( 1-k^{2}\right) \left( 1-9k^{2}+12k^{4}-k^{6}\right) }{%
\left( 3k^{2}-1\right) ^{\frac{37}{18}}}e^{-\frac{1}{6}k^{2}}\mathbf{K}%
\left( k\right) dk=
\end{equation*}%
\begin{equation}
\left( \frac{\left( 1-k^{2}\right) \left( 1-3k^{2}+k^{4}\right) }{3k^{2}-1}%
\mathbf{K}\left( k\right) +\mathbf{E}\left( k\right) \right) \frac{e^{-\frac{%
1}{6}k^{2}}}{\left( 3k^{2}-1\right) ^{\frac{1}{18}}}  \label{Eqn202}
\end{equation}%
\begin{equation}
\dint kk^{\prime 2}\left( \frac{k^{2}}{4}-2\right) e^{-\frac{1}{4}k^{2}}%
\mathbf{K}\left( k\right) dk=k^{\prime 2}\left[ \left( 1-\frac{k^{2}}{2}%
\right) \mathbf{K}\left( k\right) -\mathbf{E}\left( k\right) \right]
\label{Eqn203}
\end{equation}%
\begin{equation*}
\dint \frac{k\left( 5k^{2}-4k^{4}+k^{6}-1\right) }{k^{\prime 3}}e^{-\frac{1}{%
2}k^{2}}\mathbf{K}\left( k\right) dk=
\end{equation*}%
\begin{equation}
\frac{e^{-\frac{1}{2}k^{2}}}{k^{\prime }}\left( \left( k^{4}-k^{2}+1\right) 
\mathbf{K}\left( k\right) -\mathbf{E}\left( k\right) \right) \text{.}
\label{Eqn204}
\end{equation}

\subsection{Integrals from fragments of the equation for the complete
elliptic integral of the second kind}

In equation (\ref{Eqn1}) with $y\left( k\right) $ a solution of equation (%
\ref{Eqn148}) we can take $h\left( k\right) =1$ and employ the recurrence
relation:%
\begin{equation}
\frac{d\mathbf{E}\left( k\right) }{dk}=\frac{\mathbf{E}\left( k\right) -%
\mathbf{K}\left( k\right) }{k}  \label{Eqn205}
\end{equation}%
to obtain the integral%
\begin{equation}
\dint \frac{k\mathbf{E}\left( k\right) }{k^{\prime 2}}dk=\mathbf{K}\left(
k\right) -\mathbf{E}\left( k\right) \text{.}  \label{Eqn206}
\end{equation}%
Using the identity (\ref{Eqn179}) equation (\ref{Eqn148}) can be expressed
in the alternative form:%
\begin{equation}
z^{\prime \prime }\left( k\right) +\frac{1}{k}z^{\prime }\left( k\right)
+\left( 1+\frac{k^{2}}{1-k^{2}}\right) z\left( k\right) =0  \label{Eqn207}
\end{equation}%
and suitable fragmentary equations from equations (\ref{Eqn148}) and (\ref%
{Eqn207})\ are:%
\begin{equation}
h^{\prime \prime }\left( k\right) +\frac{1}{k}h^{\prime }\left( k\right) =0
\label{Eqn208}
\end{equation}%
\begin{equation}
h^{\prime \prime }\left( k\right) +\frac{1}{1-k^{2}}h\left( k\right) =0
\label{Eqn209}
\end{equation}%
\begin{equation}
h^{\prime \prime }\left( k\right) +\frac{1}{k}h^{\prime }\left( k\right) +h=0
\label{Eqn210}
\end{equation}%
\begin{equation}
h^{\prime \prime }\left( k\right) +h=0  \label{Eqn211}
\end{equation}%
$\allowbreak $%
\begin{equation}
\frac{1}{k}h^{\prime }\left( k\right) +\frac{1}{1-k^{2}}h\left( k\right) =0
\label{Eqn212}
\end{equation}%
\begin{equation}
\frac{1}{k}h^{\prime }\left( k\right) +\frac{k^{2}}{1-k^{2}}h\left( k\right)
=0  \label{Eqn213}
\end{equation}%
\begin{equation}
\frac{1}{k}h^{\prime }\left( k\right) +h=0\text{.}  \label{Eqn215}
\end{equation}%
Equations (\ref{Eqn208})-(\ref{Eqn215}) have the respective (non constant)
solutions%
\begin{equation}
h\left( k\right) =\ln \left( k\right)  \label{Eqn216}
\end{equation}%
\begin{equation}
h\left( k\right) =k^{\prime }\QATOPD\{ \} {P_{\varphi -1}^{1}\left( k\right)
}{Q_{\varphi -1}\left( k\right) }  \label{Eqn217}
\end{equation}%
\begin{equation}
h\left( k\right) =\QATOPD\{ \} {J_{0}\left( k\right) }{Y_{0}\left( k\right) }
\label{Eqn218}
\end{equation}%
\begin{equation}
h\left( k\right) =\QATOPD\{ \} {\sin \left( k\right) }{\cos \left( k\right) }
\label{Eqn219}
\end{equation}%
\begin{equation}
h\left( k\right) =k^{\prime }  \label{Eqn220}
\end{equation}%
\begin{equation}
h\left( k\right) =k^{\prime }e^{\frac{1}{2}k^{2}}  \label{Eqn221}
\end{equation}%
\begin{equation}
h\left( k\right) =e^{-\frac{1}{2}k^{2}}  \label{Eqn222}
\end{equation}%
where in equation (\ref{Eqn217}), $\varphi \equiv \left( 1+\sqrt{5}\right)
/2 $ is the Golden Ratio. The solutions (\ref{Eqn216})-(\ref{Eqn222}) give
the respective integrals: 
\begin{equation}
\dint \frac{k\ln \left( k\right) \mathbf{E}\left( k\right) }{k^{\prime 2}}%
dk=\left( 1-\ln \left( k\right) \right) \mathbf{E}\left( k\right) +\ln
\left( k\right) \mathbf{K}\left( k\right)  \label{Eqn223}
\end{equation}%
\begin{equation*}
\dint kk^{\prime }\QATOPD\{ \} {P_{\varphi -1}^{1}\left( k\right)
}{Q_{\varphi -1}^{1}\left( k\right) }\mathbf{E}\left( k\right) dk=
\end{equation*}%
\begin{equation}
\left( k^{\prime }\mathbf{K}\left( k\right) +\frac{\left( \varphi
k^{2}-1\right) \mathbf{E}\left( k\right) }{k^{\prime }}\right) \QATOPD\{ \}
{P_{\varphi -1}^{1}\left( k\right) }{Q_{\varphi -1}^{1}\left( k\right) }-%
\frac{\left( \varphi -1\right) k}{k^{\prime }}\mathbf{E}\left( k\right)
\QATOPD\{ \} {P_{\varphi }^{1}\left( k\right) }{Q_{\varphi }^{1}\left(
k\right) }  \label{Eqn225}
\end{equation}%
\begin{equation}
\dint \frac{k^{3}}{k^{\prime 2}}J_{0}\left( k\right) \mathbf{E}\left(
k\right) dk=J_{0}\left( k\right) \left( \mathbf{K}\left( k\right) -\mathbf{E}%
\left( k\right) \right) -kJ_{1}\left( k\right) \mathbf{E}\left( k\right)
\label{Eqn226}
\end{equation}%
\begin{equation*}
\dint \left( \QATOPD\{ \} {\cos \left( k\right) }{-\sin \left( k\right) }+%
\frac{k^{3}}{k^{\prime 2}}\QATOPD\{ \} {\sin \left( k\right) }{\cos \left(
k\right) }\right) \mathbf{E}\left( k\right) dk=
\end{equation*}%
\begin{equation}
k\QATOPD\{ \} {\cos \left( k\right) }{-\sin \left( k\right) }\mathbf{E}%
\left( k\right) -\QATOPD\{ \} {\sin \left( k\right) }{\cos \left( k\right)
}\left( \mathbf{E}\left( k\right) -\mathbf{K}\left( k\right) \right)
\label{Eqn227}
\end{equation}%
\begin{equation}
\dint \frac{k}{k^{\prime 3}}\mathbf{E}\left( k\right) dk=\frac{\mathbf{E}%
\left( k\right) }{k^{\prime }}-k^{\prime }\mathbf{K}\left( k\right)
\label{Eqn228}
\end{equation}%
\begin{equation*}
\dint kk^{\prime }\left( 1+\frac{k^{2}\left( k^{4}+k^{2}-3\right) }{%
k^{\prime 4}}\right) e^{\frac{1}{2}k^{2}}\mathbf{E}\left( k\right) dk=
\end{equation*}%
\begin{equation}
e^{\frac{1}{2}k^{2}}\left( k^{\prime }\mathbf{K}\left( k\right) -\left(
k^{\prime }+\frac{k^{4}}{k^{\prime }}\right) \mathbf{E}\left( k\right)
\right)  \label{Eqn229}
\end{equation}%
\begin{equation}
\dint \left( k^{\prime 2}-\left( \frac{k}{k^{\prime }}\right) ^{2}\right)
e^{-\frac{1}{2}k^{2}}\mathbf{E}\left( k\right) dk=\left[ \mathbf{K}\left(
k\right) -\left( 1+k^{2}\right) \mathbf{E}\left( k\right) \right] e^{-\frac{1%
}{2}k^{2}}\text{.}  \label{Eqn230}
\end{equation}

\subsection{Integrals involving the Golden Ratio}

Each transformed differential equation such as (\ref{Eqn150}),(\ref{Eqn152}%
),(\ref{Eqn154}) and (\ref{Eqn156}) yields its own set of fragmentary
equations and their corresponding integrals, similar in number to those
givenabove for equations (\ref{Eqn146}) and (\ref{Eqn148}), and too numerous
to present here. However, of particular interest are integrals related to
equation (\ref{Eqn225}) above, which link Legendre/associated Legendre
functions with $\mathbf{E}\left( k\right) $, $\mathbf{K}\left( k\right) $
and the Golden Ratio $\varphi $. Equation (\ref{Eqn156}) above has the
associated fragmentary equations:%
\begin{equation}
h^{\prime \prime }\left( k\right) -\frac{2k}{1-k^{2}}+\left( \frac{1}{1-k^{2}%
}-\frac{1}{\left( 1-k^{2}\right) ^{2}}\right) h\left( k\right) =0
\label{Eqn231}
\end{equation}%
\begin{equation}
h^{\prime \prime }\left( k\right) -\frac{2k}{1-k^{2}}h^{\prime }\left(
k\right) +\frac{1}{1-k^{2}}h\left( k\right) =0  \label{Eqn232}
\end{equation}%
which have the respective solutions:%
\begin{equation}
h\left( k\right) =\QATOPD\{ \} {P_{\varphi -1}^{1}\left( k\right)
}{Q_{\varphi -1}^{1}\left( k\right) }  \label{Eqn233}
\end{equation}%
\begin{equation}
h\left( k\right) =\QATOPD\{ \} {P_{\varphi -1}\left( k\right) }{Q_{\varphi
-1}\left( k\right) }  \label{Eqn234}
\end{equation}%
and substituting these results in equation (\ref{Eqn1}) gives the respective
integrals$:$%
\begin{equation*}
\dint \frac{k^{\prime }}{k^{3/2}}\QATOPD\{ \} {P_{\varphi -1}^{1}\left(
k\right) }{Q_{\varphi -1}^{1}\left( k\right) }\mathbf{E}\left( k\right) dk=%
\frac{4\sqrt{k}}{k^{\prime }}\times
\end{equation*}%
\begin{equation}
\left[ \left( \left[ \varphi k-\frac{3-k^{2}}{2k}\right] \mathbf{E}\left(
k\right) +\frac{k^{\prime }}{\sqrt{k}}\mathbf{K}\left( k\right) \right)
\QATOPD\{ \} {P_{\varphi -1}^{1}\left( k\right) }{Q_{\varphi -1}^{1}\left(
k\right) }-\left( \varphi -1\right) \mathbf{E}\left( k\right) \QATOPD\{ \}
{P_{\varphi }^{1}\left( k\right) }{Q_{\varphi }^{1}\left( k\right) }\right]
\label{Eqn235}
\end{equation}%
\begin{equation*}
\dint \frac{1-6k^{2}+k^{4}}{\left( kk^{\prime }\right) ^{3/2}}\QATOPD\{ \}
{P_{\varphi -1}\left( k\right) }{Q_{\varphi -1}\left( k\right) }\mathbf{E}%
\left( k\right) dk=\frac{4\sqrt{k}}{k^{\prime }}\times
\end{equation*}%
\begin{equation}
\left[ \left[ \left( \phi k-\frac{3-k^{2}}{2k}\right) \mathbf{E}\left(
k\right) +\frac{k^{\prime 2}}{k}\mathbf{K}\left( k\right) \right] \QATOPD\{
\} {P_{\varphi -1}\left( k\right) }{Q_{\varphi -1}\left( k\right) }-\varphi 
\mathbf{E}\left( k\right) \QATOPD\{ \} {P_{\varphi }\left( k\right)
}{Q_{\varphi }\left( k\right) }\right]  \label{Eqn236}
\end{equation}%
$\allowbreak $

\section{Comments and conclusions}

A new Lagrangian method has been presented for deriving indefinite integrals
of any function which obeys a second order linear ordinary differential
equation. The main result can also be proved very simply without variational
calculus. Various approaches have been presented to exploit the main formula
and transformation methods have been given to multiply the number of
interesting integrals obtainable. Sample results have been presented for
some special functions, but these only scratch the surface of what is
possible. The total number of indefinite integrals given in current tables
and handbooks can be multiplied considerably with the method. All of the
results presented here have been numerically checked using Mathematica [12].

\end{document}